\newcommand{\R}{\ensuremath{\mathbb{R}}}
\newcommand{\C}{\ensuremath{\mathbb{C}}}
\newcommand{\N}{\ensuremath{\mathbb{N}}}
\newcommand{\F}{\ensuremath{\mathbb{F}}}
\newtheorem{theorem}{Theorem}
\newtheorem{proposition}[theorem]{Proposition}
\newtheorem{lemma}[theorem]{Lemma}
\title[Polynomial solutions of polynomial Riccati equations] {The number of
polynomial solutions \\ of polynomial Riccati equations}
\author[A. Gasull]{Armengol Gasull}
\address{Departament de Matem\`{a}tiques, Universitat Aut\`{o}noma
de Barcelona, 08193 Bellater\-ra, Barcelona, Catalonia, Spain}
\email{gasull@mat.uab.cat}
\author[J. Torregrosa]{Joan Torregrosa}
\address{Departament de Matem\`{a}tiques, Universitat Aut\`{o}noma
de Barcelona, 08193 Bellater\-ra, Barcelona, Catalonia, Spain}
\email{torre@mat.uab.cat}
\author[X. Zhang]{Xiang Zhang}
\address{Department of Mathematics and MOE-LSC, Shanghai Jiao Tong
University, Shanghai, 200240, P. R. China}
\email{xzhang@sjtu.edu.cn}
\subjclass[2010]{Primary 34A05. Secondary 34C05; 37C10.}
\keywords{Riccati differential equations, polynomial differential equations, trigonometric
polynomial differential equations, number of polynomial solutions, explicit solutions}
\begin{document}

\begin{abstract}
Consider real or complex polynomial Riccati differential equations $a(x) \dot
y=b_0(x)+b_1(x)y+b_2(x)y^2$ with all the involved functions being polynomials of degree at most
$\eta$. We prove that the maximum number of polynomial solutions is $\eta+1$ (resp. 2) when
$\eta\ge 1$ (resp. $\eta=0$) and that these bounds are sharp.

For real trigonometric polynomial Riccati differential equations with all the functions being
trigonometric polynomials of degree at most $\eta\ge 1$ we prove a similar result. In this case,
the maximum number of trigonometric polynomial solutions is $2\eta$ (resp. $3$) when $\eta\ge 2$
(resp. $\eta=1$) and, again, these bounds are sharp.

Although the proof of both results has the same starting point, the
classical result that asserts that the cross ratio of four different
solutions of a Riccati differential equation is constant, the
trigonometric case is much more involved. The main reason is that
the ring of trigonometric polynomials is not a unique factorization
domain.
\end{abstract}

\maketitle

\section{Introduction and statement of the main results}

Riccati differential equations
\begin{equation}\label{e1}
a(x)\dot y =b_0(x)+b_1(x)y+b_2(x)y^2,
\end{equation}
where the dot denotes the derivative with respect to the independent variable $x$, appear in all text
books of ordinary differential equations as first examples of nonlinear equations. It is renowned
that if one explicit solution is known then they can be totally solved, by transforming them into
linear differential equations, see for instance \cite{Re1972}. One of their more remarkable
properties is that given any four solutions defined on an open set $\mathcal{I}\subset\R$, where
$y_1(x)<y_2(x)<y_3(x)<y_4(x),$ there exists a constant $c$ such that
\begin{equation}\label{rr}
\frac{(y_4(x)-y_1(x))(y_3(x)-y_2(x))
}{(y_3(x)-y_1(x))(y_4(x)-y_2(x)) }=c \quad \text{for all}\quad
x\in\mathcal{I}.
\end{equation}
Geometrically this result says that the cross ratio of the four functions is constant.
Analytically, it implies that once we know three solutions any other solution can be given in a
closed form from these three. We will often use an equivalent version of this fact, see
Lemmas~\ref{l3} and \ref{l3-1}.

Historically, Riccati equation has played a very important role in the pioneer work of D.
Bernoulli about the smallpox vaccination, see \cite{B, D} and also appears in many mathematical
and applied problems, see \cite{Hi, PWW, Ra1936}. The main motivation of this paper came to us
reading the works of Campbell and Golomb (\cite{Ca1952, CG1954}), where the authors present
examples of polynomial Riccati differential equations with 4 and 5 polynomial solutions. The
respective degrees of the polynomials $a, b_0, b_1$ and $b_2$ in these examples are $3, 3, 2, 0$ and
$4, 4, 3, 0$, respectively. At that point we wonder about the maximum number of polynomial solutions
that Riccati differential equations can have. Quickly, we realize that an upper bound for this
maximum number does not exist for linear differential equations. For instance the equation $\dot
y=x$ has all its solutions polynomials $y=x^2/2+c.$ In fact it is very easy to prove that linear
equations have 0, 1 or all its solutions being polynomials.

Looking at some other papers we found that Rainville \cite{Ra1936} in 1936 proved the existence of
one or two polynomial solutions for a subclass of \eqref{e1}. Bhargava and Kaufman
\cite{BK1965, BK1966} obtained some sufficient conditions for equation \eqref{e1} to have
polynomial solutions. Campbell and Golomb \cite{Ca1952, CG1954} provided some criteria determining
the degree of polynomial solutions of equation \eqref{e1}. Bhargava and Kaufman \cite{BK1964}
considered a more general form of equations than \eqref{e1}, and got some criteria on the degree
of polynomial solutions of the equations. Gin\'e, Grau and Llibre \cite{GGL2011} proved that
polynomial differential equations
\begin{equation}\label{e2}
\dot y=b_0(x)+b_1(x)y+b_2(x)y^2+\ldots+b_n(x)y^n,
\end{equation}
with $b_i(x)\in \R[x]$, $i=0, 1, \ldots, n$, and $b_n(x)\not\equiv 0$ have
at most $n$ polynomial solutions and they also prove that this bound is sharp.

In short, to be best of our knowledge, the question of knowing the maximum number of polynomial
solutions of Riccati polynomial differential equations when $a(x)$ is nonconstant is open. We
believe that it is also interesting because it is reminiscent of a similar question of Poincar\'{e}
about the degree and number of algebraic solutions of autonomous planar polynomial differential
equations in terms of their degrees, when these systems have finitely many algebraic solutions.
Recall that in this situation, similarly that for the linear case, there are planar polynomial
equations having rational first integrals for which all solutions are algebraic. As far as we
know, Poincar\'{e}'s question is open even for planar quadratic differential equations.

The first result of this paper solves completely our question for real or complex polynomial
Riccati differential equations. To be more precise, we say that equation \eqref{e1} is a
\emph{polynomial Riccati differential equation of degree $\eta$} when $a, \, b_0, \, b_1, \, b_2 \in
\F[x],$ the ring of polynomials in $x$ with coefficients in $\F=\R$ or $\C$, and
$\eta:=\max\{\alpha, \beta_0, \beta_1, \beta_2\},$ where $\alpha=\deg a$, and $\beta_i=\deg b_i$ for
$i=0, 1, 2.$
\begin{theorem}\label{t1}
Real or complex polynomial Riccati differential equations \eqref{e1} of degree~$\eta$ and
$b_2(x)\not\equiv 0$, have at most $\eta+1$ $($resp. $2)$ polynomial solutions when $\eta\ge 1$
$($resp. $\eta=0)$. Moreover, there are equations of this type having any given number of
polynomial solutions smaller than or equal to these upper bounds.
\end{theorem}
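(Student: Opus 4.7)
The plan is to handle the two regimes $\eta=0$ and $\eta\geq 1$ separately. The case $\eta=0$ is direct: all four coefficients are constants and $b_2\not\equiv 0$, so a polynomial $y$ of degree $d\geq 1$ would produce on the right-hand side of \eqref{e1} a term $b_2 y^2$ of degree $2d$ that cannot be cancelled by the other terms (all of degree at most $d$), forcing $y$ to be a constant. The constant solutions satisfy the quadratic $b_2 y^2+b_1 y+b_0=0$, giving at most two, matching the stated bound.

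For $\eta\geq 1$ the plan is based on the cross-ratio \eqref{rr}, invoked through Lemmas~\ref{l3} and \ref{l3-1}. I first record two preparatory facts: (i)~every polynomial solution has degree at most $\eta$, by matching top-degree coefficients in \eqref{e1}; (ii)~for any two polynomial solutions $y_i,y_j$, the difference $v:=y_j-y_i$ satisfies the Bernoulli-type equation $a\dot v=v\bigl(b_1+b_2(y_i+y_j)\bigr)$, and since $a\dot v/v$ must be a polynomial, a local-valuation argument applied to each irreducible factor of $v$ shows that every such factor divides $a$. If the equation has at most two polynomial solutions then the bound $\eta+1$ is already established, so assume there are three, $y_1,y_2,y_3$, and set $A=y_3-y_1$, $B=y_3-y_2$, $d=\gcd(A,B)$, $A=dA'$, $B=dB'$ with $\gcd(A',B')=1$. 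By the cross-ratio, any further polynomial solution has the form $y=(y_1 B-cy_2 A)/(B-cA)$ for a unique $c\in\F\setminus\{0,1\}$, and straightforward algebra, together with (ii), reduces the condition that $y$ be a polynomial to the divisibility $(B'-cA')\mid d$ in $\F[x]$. Since the polynomials $B'-cA'$ are pairwise coprime for distinct values of $c$, their product divides $d$.

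To extract the sharp constant $\eta+1$, I would subtract the logarithmic derivatives of $A$ and $B$ obtained from (ii) to produce the polynomial identity
\[
a\,W(A',B')=b_2\,r\,d\,A'\,B',
\]
where $W$ denotes the Wronskian and $r=A'-B'=(y_2-y_1)/d$. A short case analysis on $\deg A'$ versus $\deg B'$, and on the location of the exceptional value $c^{*}=\operatorname{lead}(B')/\operatorname{lead}(A')$ when the two degrees coincide, then yields $\deg d\leq\eta-2$ in every situation. The main obstacle lies precisely here: the naive degree bound coming from $\deg A\leq\eta$ alone gives only $\deg d\leq\eta-1$, which would leave one polynomial solution too many; the required improvement is forced by an extra degree drop in the Wronskian, which in the delicate subcase $\deg r=0$ and $\deg b_2=0$ follows from the observation that $A'-B'$ being constant makes $A'$ and $B'$ share their top-degree polynomial part. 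Once $\deg d\leq\eta-2$ is in hand, the pairwise coprimality of the $B'-cA'$ gives at most $\eta-2$ extra polynomial solutions, so in total $N\leq\eta+1$.

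Sharpness would be obtained by reverse engineering: for each target $N\in\{1,\ldots,\eta+1\}$, prescribe $N$ polynomials $y_1,\ldots,y_N$ whose pairwise differences factor through a chosen $a(x)$ (for example $a(x)=\prod_{k=0}^{\eta-1}(x-k)$ or a pure power of $x$), recover $b_0,b_1,b_2$ by solving any three of the Riccati relations $a\dot y_i=b_0+b_1 y_i+b_2 y_i^2$, and verify that the resulting coefficients stay of degree at most $\eta$ and introduce no further polynomial solutions.
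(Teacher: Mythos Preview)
Your outline is essentially the paper's strategy: use the cross-ratio to parametrize all solutions from three known ones, reduce the polynomiality condition to a divisibility in $\F[x]$, and bound the relevant gcd. After the paper's shift making $y_0=0$ a solution (Lemma~\ref{l2}), its $g=\gcd(y_1,y_2)$ is your $d=\gcd(A,B)$, and your Wronskian identity $a\,W(A',B')=b_2\,r\,d\,A'B'$ is the rearranged formula~\eqref{e5}. The bookkeeping differs: the paper counts via the number $N(g)$ of distinct roots of $g$ and runs a case analysis on $N(g)\in\{\eta,\eta-1,\le\eta-2\}$ (Lemma~\ref{l5}, Proposition~\ref{pr:case1}), whereas you bound $\deg d$ directly from the identity, which is arguably cleaner.

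There is, however, a small slip in your final count. From $\deg d\le\eta-2$ and pairwise coprimality alone you cannot conclude ``at most $\eta-2$ extra solutions'': at the exceptional $c^{*}$ (when $\deg A'=\deg B'$) the factor $B'-c^{*}A'$ may be a nonzero constant, which divides $d$ for free and would push the total to $\eta+2$. The repair is already implicit in your identity: if $c^{*}\notin\{0,1\}$ then $\deg r=\deg A'\ge1$, and comparing degrees in $a\,W(A',B')=b_2\,r\,d\,A'B'$ yields the sharper bound $\deg d\le\eta-2-\deg r\le\eta-3$, which absorbs that extra solution; if instead $c^{*}\in\{0,1\}$ it is not a new solution at all. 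The paper meets the same obstacle and resolves it by the parallel split in Lemma~\ref{l5}$(ii)$ and the last subcase of Proposition~\ref{pr:case1}. For sharpness your ``reverse engineering'' is too vague to count as a proof; the paper exhibits the explicit family $a(x)\dot y=\dot a(x)\,y+y^2$ with $a$ of degree $\eta$ and prescribed root multiplicities (Proposition~\ref{examplej}), whose nonzero solutions are $y=-a(x)/(x-c)$, so the exact number of polynomial solutions is read off immediately.
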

Notice that this theorem shows that the examples of Campbell and Golomb quoted in the
introduction precisely correspond to Riccati equations of degrees $\eta\in\{3, 4\}$ with the
maximum number ($\eta+1$) of polynomial solutions.

As we will see in Proposition~\ref{examplej}, the simple Riccati equation $a(x) \dot y= \dot
a(x)y+y^2,$ with $a(x)$ a polynomial of degree $\eta$ and simple roots has exactly $\eta+1$
polynomial solutions.

\medskip

When the functions appearing in \eqref{e1} are real trigonometric polynomials of degree at most
$\eta$ we will say that we have a \emph{trigonometric polynomial Riccati differential equation of
degree $\eta$}. Recall that the degree of a real trigonometric polynomial is defined as the degree
of its corresponding Fourier series. We are interested in them, not as a simple generalization of
the polynomial ones, but because they appear together with Abel equations ($n=3$ in \eqref{e2})
in the study of the number of limit cycles on planar polynomial differential equations with
homogeneous nonlinearities, see for instance \cite{GL, L}. In particular, periodic orbits
surrounding the origin of these planar systems correspond to $2\pi$-periodic solutions of the
corresponding Abel or Riccati equations.

It is an easy consequence of relation \eqref{rr} that when
$a(x)$ does not vanish, $T$-periodic Riccati equations of class
$\mathcal{C}^1$ have either continua of $T$-periodic solutions or at
most two $T$-periodic solutions. When $a(x)$ vanishes the situation is
totally different. These equations are singular at the zeros of $a$.
Equations with this property are called \emph{constrained
differential equation} and the zero set of $a$ is named
\emph{impasse set}, see \cite{SZ}. In particular, the Cauchy problem
has no uniqueness on the impasse set and the behavior of the
solutions of the Riccati equation is more complicated. As we will
see, there are real trigonometric polynomial Riccati differential
equations with an arbitrary large number of trigonometric
polynomial solutions, which are $2\pi$-periodic solutions of the
equation. As we will see this number is bounded above in terms of
the degrees of the corresponding trigonometric polynomials defining
the equation.

In general, we will write real trigonometric polynomial Riccati differential equations as
\begin{equation}\label{e1R}
A(\theta)Y'=B_0(\theta)+B_1(\theta)Y+B_2(\theta)Y^2,
\end{equation}
where the prime denotes the derivative with respect to $\theta$. To
be more precise $A,$ $B_0,$ $B_1,$ $B_2\in \R_t[\theta]:=\R[\cos
\theta, \sin \theta]$ the ring of trigonometric polynomials in $\cos
\theta, \sin \theta$ with coefficients in $\R.$ In this case we have
also $\eta:=\max\{\alpha, \beta_0, \beta_1, \beta_2\},$ where
$\alpha=\deg A$, and $\beta_i=\deg B_i$ for $i=0, 1, 2.$ Our second
main result is:

\begin{theorem}\label{t2}
Real trigonometric polynomial Riccati differential equations \eqref{e1R} of degree $\eta\ge 1$ and
$B_2(\theta)\not\equiv 0$, have at most $2\eta$ $($resp. $3)$ trigonometric polynomial solutions
when $\eta\ge 2$ $($resp. $\eta=1)$. Moreover, there are equations of this type having any given
number of trigonometric polynomial solutions smaller than or equal to these upper bounds.
\end{theorem}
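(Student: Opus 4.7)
The plan is to follow the same two-part template as Theorem~\ref{t1}: extract an upper bound from the cross-ratio identity, then match it with explicit constructions. The essential new difficulty, as the abstract highlights, is that the ring $\R_t[\theta] = \R[\cos\theta, \sin\theta]$ is not a UFD, so the divisibility count that closes the polynomial proof must be carried out in an auxiliary unique factorization domain and then transferred back under a compatibility constraint.

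For the upper bound, assume three distinct trig-polynomial solutions $Y_1, Y_2, Y_3$ (otherwise the claim is trivial). The trigonometric cross-ratio identity (Lemma~\ref{l3-1}) parameterizes every further trig-polynomial solution $Y$ by a unique real constant $c$ via
\begin{equation*}
Y - Y_1 \;=\; \frac{-c\,(Y_3 - Y_1)(Y_2 - Y_1)}{(Y_3 - Y_2) - c\,(Y_3 - Y_1)},
\end{equation*}
so $Y \in \R_t[\theta]$ iff the pencil element $D_c := (Y_3 - Y_2) - c\,(Y_3 - Y_1)$ divides the fixed product $(Y_3 - Y_1)(Y_2 - Y_1)$ inside $\R_t[\theta]$. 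I would then lift to $\C[z,z^{-1}]$ through $z = e^{\mathrm{i}\theta}$, where a real trig polynomial of degree $d$ becomes, after multiplication by $z^{d}$, a conjugate-reciprocal polynomial of degree $2d$ in the UFD $\C[z]$. Writing $P = Y_3 - Y_1$, $Q = Y_2 - Y_3$, $G = \gcd(P,Q)$ and $P = GP'$, $Q = GQ'$, a short analysis of which zeros of $P(P+Q)$ are shared with $D_c = G\,(P' + cQ')$ reduces the condition (for $c \ne 0,1$) to $(P' + cQ') \mid G$. Counting conjugate-reciprocal divisors of $G$ that sit on the real one-parameter pencil $\{P' + cQ'\}_{c\in\R}$, and using $\deg G \le \eta$, then yields the bound $2\eta$ for $\eta \ge 2$. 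The case $\eta = 1$ has to be inspected by hand, because the generic factor count is too coarse and one obtains instead the sharper bound $3$.

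For sharpness, I would construct, for each $k$ between $0$ and the upper bound, an equation of degree $\eta$ with exactly $k$ trig-polynomial solutions. The key case is $k$ equal to the bound. Motivated by the polynomial example $a\dot y = \dot a y + y^2$ mentioned after Theorem~\ref{t1}, pick $A(\theta)\in\R_t[\theta]$ of degree $\eta$ with $2\eta$ simple zeros on $[0,2\pi)$, and design the coefficients in \eqref{e1R} so that $Y = 0$ is a solution and, for each conjugate-reciprocal divisor $\Phi$ of $A$ (obtained by pairing the zeros of $A$ compatibly with the symmetry $\theta \mapsto -\theta$), a suitable rescaling $Y = A/\Phi$ is another. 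Intermediate counts are obtained by degenerating this construction, e.g.\ by merging zeros of $A$ or breaking the symmetry that produces some of the extra solutions, and $\eta = 1$ is witnessed by a small explicit equation. The principal obstacle is the joint combinatorics of the upper-bound step: simultaneously forcing $D_c$ to lie on a prescribed real pencil, to divide a fixed product, and to correspond to a conjugate-reciprocal polynomial in $\C[z]$ is exactly where the failure of unique factorization in $\R_t[\theta]$ makes the argument substantially harder than its polynomial analog.
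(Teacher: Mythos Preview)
Your overall architecture matches the paper's: reduce via the cross-ratio to a one-parameter pencil, lift to a UFD, and count how many pencil members can divide a fixed polynomial. The paper lifts via the Weierstrass substitution $x=\tan(\theta/2)$ (so trig polynomials of degree $\nu$ become $f(x)/(1+x^2)^\nu$ with $\gcd(f,1+x^2)=1$), whereas you lift via $z=e^{\mathrm i\theta}$ to conjugate-reciprocal polynomials; these are equivalent choices of UFD, so no objection there.

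The gap is in the sentence ``Counting \dots\ and using $\deg G\le\eta$, then yields the bound $2\eta$.'' This is exactly where the work is, and the naive count does \emph{not} give $2\eta$. Each root of $G$ determines at most one admissible $c$, so you get at most $N(G)+3$ solutions (or $N(G)+4$ if some pencil member degenerates to a constant); since $G$ has trig degree $\le\eta$ it has up to $2\eta$ roots in your $z$-variable, and the raw bound is $2\eta+3$ or $2\eta+4$, not $2\eta$. The paper spends five lemmas (its Lemmas~\ref{l-new}--\ref{lemma-final-2}) closing this gap: when $N(g)\le 2\eta-4$ the crude count already suffices, but the cases $N(g)\in\{2\eta-3,2\eta-2,2\eta-1,2\eta\}$ each require a separate structural argument exploiting the identity for $b_2(x)$ (Lemma~\ref{lemma-b2}) and the fact that $g,\tilde y_1,\tilde y_2$ are coprime to $1+x^2$. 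In particular, the cases $N(g)\ge 2\eta-1$ are shown to force $B_2\equiv 0$, and the cases $N(g)\in\{2\eta-2,2\eta-3\}$ need a delicate analysis showing $\tilde y_1-\tilde y_2$ must carry a factor $1+x^2$, which pushes $\deg r_c\ge 2$ and halves the count. None of this is visible in your proposal, and the conjugate-reciprocal symmetry alone does not supply it.

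Your sharpness sketch is also off target. The extremal examples in the paper do \emph{not} come from $A\,Y'=A'\,Y+Y^2$ with $A$ having $2\eta$ simple zeros; they are built so that $N(g)=\deg g=2\eta-3$ (the only case where $2\eta$ is actually attained), with $\tilde y_1,\tilde y_2$ linear and $B_2\equiv -1$. Your ``pair the zeros of $A$'' idea would at best produce $\eta+1$ solutions, as in the polynomial theorem, not $2\eta$.
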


For instance, consider the degree 3 trigonometric polynomial Riccati equation~\eqref{e1R} with
$A(\theta)\!=\!5 \sin\theta+8 \sin(2 \theta)+5 \sin(3 \theta),$ $B_0(\theta)\!=\!0,$
$B_1(\theta)\!=\!2+6 \cos\theta+18 \cos(2 \theta)+10 \cos(3 \theta),$ and $B_2(\theta)\!=\!-1.$ It
has exactly six trigonometric polynomial solutions $Y_1(\theta)= 0,$
\begin{align*}
 Y_2(\theta)&= 10+16 \cos\theta+10 \cos(2 \theta), \\[1mm]
Y_3(\theta)&=1-2 \cos\theta+3 \sin(2 \theta)+ \cos(2 \theta), \\[1mm]
Y_4(\theta)&=1-2 \cos\theta-3 \sin(2 \theta)+ \cos(2 \theta), \\[1mm]
Y_5(\theta)&=-3-8 \sin\theta-2 \cos\theta-5 \sin(2 \theta)+5 \cos(2 \theta), \\[1mm]
Y_6(\theta)&=-3+8 \sin\theta-2 \cos\theta+5\sin(2 \theta)+5 \cos(2 \theta).
\end{align*}
This example is constructed following the procedure described after
the proof of Theorem~\ref{t2}. From our general study of \eqref{e1R}
it can be understood why these solutions cross  the impasse set, see
Lemma~\ref{l4-1}. This phenomenon is illustrated in
Figure~\ref{fig1}.

\begin{figure}[h]

\includegraphics{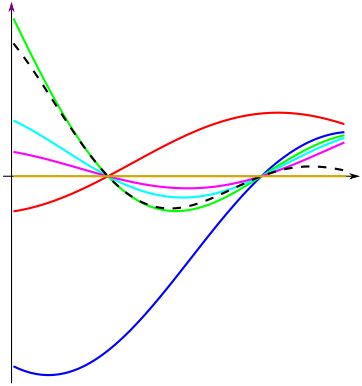}
\caption{The function $A(\theta)$ and the six  trigonometric polynomial solutions for a
trigonometric polynomial Riccati equation of degree 3. The graph of $A(\theta)$ is drawn as a
dashed line. } \label{fig1}
\end{figure}

As we will see, many steps of the proof of Theorem~\ref{t1} are
based on divisibility arguments in the ring of polynomials. A major
difference for proving Theorem~\ref{t2} is that the ring of
trigonometric polynomials is no more a \emph{Unique Factorization
Domain}. This can be seen for instance using the celebrated identity
$\cos^2\theta=1-\sin^2\theta = (1-\sin \theta)(1+\sin \theta).$ It
holds that $\cos \theta$ divides the right hand expression but it
does not divide either $1- \sin\theta$ nor $1+\sin\theta.$
Fortunately, divisibility reasonings in this ring can be addressed
by using the isomorphism
\begin{equation}\label{iso}
\begin{array}{lclc}
\Phi: & \R_t(\theta) & \longrightarrow & \R(x)\\
 & (\cos \theta, \, \sin \theta) & \longmapsto &
\left(\dfrac{1-x^2}{1+x^2}, \, \dfrac{2x}{1+x^2} \right),
\end{array}
\end{equation}
between the two fields $\R_t(\theta)=\R(\cos \theta, \sin \theta)$ and $\R(x).$ In fact, it can be
seen, see Lemma~\ref{l000}, that trigonometric polynomials of degree $\nu$ correspond in $\R(x)$
to rational functions of the form $f(x)/(1+x^2)^\nu,$ with $f(x)$ a polynomial of degree at most
$2\nu$ and coprime with $1+x^2.$ Then, for instance, the previous example $\cos^2\theta$ moves to
$(1-x^2)^2/(1+x^2)^2.$ This rational function decomposes as one of the two products
\[
\frac{1-x^2}{1+x^2}\times \frac{1-x^2}{1+x^2}, \quad
\frac{(1-x)^2}{1+x^2}\times\frac{(1+x)^2}{1+x^2},
\]
which precisely corresponds with the only two decompositions of $\cos^2\theta$ as a product of
trigonometric polynomials, which are $\cos\theta \times \cos\theta$ and $(1-\sin
\theta)\times(1+\sin \theta),$ respectively.

\medskip

The results for the polynomial Riccati equation are proved in Section~\ref{s2}. The proof of
Theorem~\ref{t2} about the trigonometric case will be given in Section~\ref{s3}. As we will see,
both proofs provide also more detailed information about the number and degrees of the polynomial
or trigonometric polynomial solutions.

\section{Polynomial Riccati equations}\label{s2}

This section is devoted to prove Theorem~\ref{t1}. Firstly we give some technical results and
secondly we organize the proof in three parts: the upper bound for $\eta\geq 2$
(Proposition~\ref{pr:case1}), the upper bound for $\eta\leq 1$ (Proposition~\ref{pr:case2}), and
the example that provides the concrete number of polynomial solutions up to reach the upper bound
(Proposition~\ref{examplej}). Recall that $a(x), b_0(x), b_1(x), b_2(x)$ in equation \eqref{e1} are
real or complex polynomials of degrees $\alpha, \beta_0, \beta_1, \beta_2,$ respectively.

Next result provides an upper bound on the degree of the polynomial
solutions of this equation.

\begin{lemma}\label{l1}
If $y_0(x)$ is a polynomial solution of equation \eqref{e1}, then $\deg y_0 \le \eta-\deg b_2.$
\end{lemma}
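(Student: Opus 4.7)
The plan is a direct degree comparison, using the fact that the quadratic term $b_2(x) y_0^2$ grows twice as fast in $\deg y_0$ as every other term in the equation. Set $d:=\deg y_0$, $\alpha:=\deg a$ and $\beta_i:=\deg b_i$. If $y_0\equiv 0$ the bound is vacuous with the usual convention $\deg 0=-\infty$, and if $d=0$ the inequality $0\le \eta-\beta_2$ follows immediately from the definition $\eta=\max\{\alpha,\beta_0,\beta_1,\beta_2\}$. So I may assume $d\ge 1$.

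I would then tabulate the degrees of the four terms appearing in \eqref{e1}:
\begin{equation*}
\deg(a\dot y_0)\le \alpha+d-1\le \eta+d-1,\quad
\deg(b_0)\le \eta,\quad
\deg(b_1 y_0)\le \eta+d,\quad
\deg(b_2 y_0^2)=\beta_2+2d,
\end{equation*}
the last one being an equality because the leading coefficient is the product of the (nonzero) leading coefficients of $b_2$ and $y_0^2$.

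Suppose for contradiction that $d>\eta-\beta_2$, equivalently $\beta_2+d>\eta$. Then
\begin{equation*}
\beta_2+2d>\eta+d\ \ge\ \max\{\eta+d-1,\ \eta+d,\ \eta\},
\end{equation*}
so $b_2 y_0^2$ is strictly larger in degree than each of $a\dot y_0$, $b_1 y_0$ and $b_0$. Consequently the right-hand side of \eqref{e1} has degree exactly $\beta_2+2d$ (no cancellation with the leading term of $b_2 y_0^2$ is possible), whereas the left-hand side has strictly smaller degree; this contradicts the equation. Hence $d\le \eta-\beta_2$.

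There is really no obstacle here: the entire content is the fact that the quadratic term must either vanish in degree or be compensated, and the only other term of comparable order of magnitude in $d$ is $b_1 y_0$, which lags behind by $d$. No divisibility or factoring arguments from the UFD structure are needed, which is why this lemma is relegated to the preliminaries and the real work of Theorem~\ref{t1} happens later.
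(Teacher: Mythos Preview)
Your proof is correct and follows the same degree-comparison idea as the paper: both argue that the term $b_2 y_0^2$, having degree $\beta_2+2d$, cannot be balanced by the remaining terms once $d>\eta-\beta_2$. The paper carries out a case distinction on which of $\beta_0,\ \beta_1+d,\ \beta_2+2d,\ \alpha+d-1$ attains the maximum, whereas you bound the three non-quadratic terms uniformly by $\eta+d$ and obtain the contradiction in one line; this is a minor streamlining of the same argument, not a different method.
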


\begin{proof} By the assumption we have
\begin{equation}\label{e3}
a(x) \dot y_0(x)\equiv b_0(x)+b_1(x) y_0(x)+b_2(x) y_0^2(x), \quad
x\in \R.
\end{equation}
Set $\delta=\deg y_0$. We will prove that $\deg(b_2 y_0)\le \eta$.
If $\delta=0$ the result is trivial.
 Assume that $\delta>0,$ then the four components in \eqref{e3} have respectively the degrees
\begin{equation}
(\alpha+\delta-1;\, \, \beta_0, \, \, \beta_1+\delta, \,
\, \beta_2+2\delta). \label{eq:4degrees}
\end{equation}
Clearly we have
\[
\max\{\beta_0, \beta_1+\delta, \beta_2+2\delta\}\ge \alpha+\delta-1.
\]
Additionally, when the above maximum is taken only by one of the three numbers
then the equality holds.

\medskip

\noindent $\bullet$ [\emph{Case
$\max\{\beta_0, \beta_1+\delta, \beta_2+2\delta\}= \alpha+\delta-1$}]:
Then $\beta_2+\delta\le \alpha-1<\eta$. That is
$\deg(b_2\, y_0)=\beta_2+\delta < \eta$.

\medskip

\noindent$\bullet$ [\emph{Case
$\max\{\beta_0, \beta_1+\delta, \beta_2+2\delta\}> \alpha+\delta-1$}]:
We must have
\[
\beta_1+\delta=\beta_2+2\delta, \quad \text{ or } \quad
\beta_0=\beta_2+2\delta, \quad \text{ or } \quad
\beta_0=\beta_1+\delta.
\]
When $\beta_1+\delta=\beta_2+2\delta$, we have
$\beta_2+\delta=\beta_1\le \eta$. If $\beta_2+2\delta=\beta_0$ then
$\beta_2+\delta<\beta_0\le \eta$. Finally, when
$\beta_0=\beta_1+\delta$, we must have $\beta_2+2\delta\le
\beta_0=\beta_1+\delta.$ Then again $\beta_2+\delta< \eta$.
\end{proof}

Lemma~\ref{l1} has a useful consequence in proving Theorem~\ref{t1}
for a reduced case.

\begin{lemma}\label{l2}
If a polynomial equation~\eqref{e1} of degree $\eta$ has a
polynomial solution, then it is equivalent to an equation with
$b_0(x)\equiv 0$ and the same degree:
\begin{equation}\label{e1*}
a(x)\dot y=b_1(x)y+b_2(x)y^2.
\end{equation}
Moreover, each polynomial solution of equation~\eqref{e1} corresponds to a polynomial solution of
the new equation~\eqref{e1*}.
\end{lemma}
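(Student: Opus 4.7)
This is the classical linearization trick for Riccati equations with one known solution, and the work consists of tracking degrees so that the reduction stays inside the polynomial Riccati class of degree $\eta$. The plan is to perform the affine change of unknown $y = y_0(x) + z$ in \eqref{e1}, expand $(y_0+z)^2 = y_0^2 + 2y_0 z + z^2$, and use the identity $a\dot y_0 = b_0 + b_1 y_0 + b_2 y_0^2$ (which holds because $y_0$ solves \eqref{e1}) to cancel every term that does not involve $z$. What remains is precisely
\begin{equation*}
a(x)\dot z \;=\; \bigl(b_1(x) + 2\,b_2(x)\,y_0(x)\bigr)\,z \;+\; b_2(x)\,z^2,
\end{equation*}
which is of the form \eqref{e1*} with the new linear coefficient $\tilde b_1(x) := b_1(x) + 2\,b_2(x)\,y_0(x)$ and the same $a$ and $b_2$ as before.

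Next I would check the correspondence between polynomial solutions. Since $y_0(x)$ is a polynomial, the affine substitution $z \mapsto y_0 + z$ is a bijection on $\F[x]$ (with inverse $y\mapsto y-y_0$), and trivially preserves the property of being a polynomial. So $y$ is a polynomial solution of \eqref{e1} if and only if $z = y - y_0$ is a polynomial solution of the transformed equation, and the two equations have exactly the same number of polynomial solutions.

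Finally I would verify that the new equation still has degree at most $\eta$, so that it is legitimately a polynomial Riccati equation of the same type. Only the coefficient of $y$ has been altered, and Lemma~\ref{l1} gives $\deg(b_2 y_0)\le \eta$ (indeed its proof shows $\deg b_2 + \deg y_0\le \eta$), so
\begin{equation*}
\deg \tilde b_1 \;\le\; \max\bigl(\deg b_1,\; \deg(b_2 y_0)\bigr) \;\le\; \eta,
\end{equation*}
while $\deg a$ and $\deg b_2$ are unchanged. The only subtle point is that the resulting degree could a priori be strictly smaller than $\eta$ (for instance if $\eta$ was attained only by $\beta_0$, or if a leading-term cancellation occurs in $b_1 + 2b_2 y_0$); this is not an obstacle, since the forthcoming bounds for the number of polynomial solutions are monotone in $\eta$, so passing to an equation of possibly smaller degree can only improve the estimate and preserves the reduction to the case $b_0\equiv 0$.
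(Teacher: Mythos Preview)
Your proof is correct and follows essentially the same route as the paper: perform the shift $w=y-y_0$, obtain $a\dot w=(b_1+2b_2y_0)w+b_2w^2$, and invoke Lemma~\ref{l1} to bound $\deg(b_1+2b_2y_0)\le\eta$. Your additional remark about the possibility that the resulting degree be strictly smaller than $\eta$ is a point the paper glosses over (its proof only shows $\le\eta$ while the statement says ``the same degree''); your observation that the subsequent bounds are monotone in $\eta$ is the right way to dispose of it.
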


\begin{proof}
If $y_0$ is a solution of \eqref{e1}, the proof follows taking the
change of variable $w(x)=y(x)-y_0(x)$ and applying Lemma~\ref{l1}.
More concretely, equation \eqref{e1} becomes
\[
a(x)\dot w=(b_1(x)+2\, b_2(x)\, y_0(x))w+b_2(x)\, w^2,
\]
and $\deg(b_1+2b_2y_0)\le \max\{\deg b_1, \deg(b_2y_0)\}\le \eta$.
\end{proof}

The next result gives a criterion to relate the zeros of $a$ with the zeros of polynomial
solutions of equation \eqref{e1*}.

\begin{lemma}\label{l4}
If $x^*$ is a zero of a nonzero polynomial solution of equation
\eqref{e1*}, then $a(x)$ also vanishes at $x=x^*.$
\end{lemma}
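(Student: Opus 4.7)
The plan is to assume $y_0$ is a nonzero polynomial solution of \eqref{e1*} with $y_0(x^*)=0$, and derive a contradiction from the assumption $a(x^*)\ne 0$ by comparing the orders of vanishing at $x^*$ of the two sides of \eqref{e1*}. Since $y_0\not\equiv 0$, I can factor
\[
y_0(x)=(x-x^*)^k\, q(x),\qquad k\ge 1,\quad q(x^*)\ne 0.
\]

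The second step is to compute both sides of
\[
a(x)\dot y_0(x)=b_1(x)y_0(x)+b_2(x)y_0(x)^2=y_0(x)\bigl(b_1(x)+b_2(x)y_0(x)\bigr).
\]
The right hand side is divisible by $y_0$, so its order of vanishing at $x^*$ is at least $k$. For the left hand side, differentiating gives $\dot y_0(x)=k(x-x^*)^{k-1}q(x)+(x-x^*)^k\dot q(x)$, whose order at $x^*$ is exactly $k-1$, with leading coefficient $k\,q(x^*)\ne 0$. Hence $a(x)\dot y_0(x)$ has order exactly $k-1$ at $x^*$ whenever $a(x^*)\ne 0$, with leading coefficient $a(x^*)k\,q(x^*)$.

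Matching orders now forces $a(x^*)k\,q(x^*)=0$, and since $k\ge 1$ and $q(x^*)\ne 0$, this gives $a(x^*)=0$, as required.

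I do not expect any real obstacle here; the argument is essentially a one-line divisibility check once the right factorization of $y_0$ is in place. The only thing to keep in mind is that the reduced form \eqref{e1*} (with $b_0\equiv 0$) is essential: it is precisely the absence of the $b_0$ term that forces the right hand side to be divisible by $y_0$, and hence that pushes its order of vanishing at $x^*$ strictly above that of the left hand side when $a(x^*)\ne 0$. Equivalently, one can phrase the same observation analytically: away from the zeros of $a$, the equation \eqref{e1*} is a regular ODE admitting $y\equiv 0$ as a solution, so uniqueness forbids any nonzero analytic solution from touching the line $y=0$.
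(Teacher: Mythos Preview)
Your proof is correct and follows essentially the same approach as the paper: both arguments compare the order of vanishing of $(x-x^*)$ on the two sides of $a(x)\dot y_0(x)=y_0(x)\bigl(b_1(x)+b_2(x)y_0(x)\bigr)$. You have simply spelled out in detail what the paper compresses into the single phrase ``equating the power of $(x-x^*)$,'' and your closing remark on ODE uniqueness is a nice additional observation.
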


\begin{proof} Let $y_1$ be a polynomial solution of \eqref{e1*}. Then we have
\[
a(x)\dot y_1(x)=(b_1(x)+b_2(x)y_1(x))y_1(x).
\]
The statement follows by equating the power of $(x-x^*)$ in the
above equation.
\end{proof}

We remark that in the later result even when $x^*$ is a zero of a
solution with multiplicity $\nu\ge2$, it may be a simple zero of
$a(x)$. For example, the equation $x\dot y=(\nu-x^\nu)y+y^2$ has the
solution $y=x^\nu$ with $x=0$ a zero of multiplicity $\nu$, but
$a(x)=x$ has $x=0$ as a simple zero. This example also illustrates
that the upper bound of the degree of polynomial solutions in
Lemma~\ref{l1} can be achieved.

Next lemma is the version of \eqref{rr} that we will use. From now
on, in this work, given two polynomials $f$ and $g$ we will denote
by $\gcd(f,g)$ their monic greater common divisor.

\begin{lemma}\label{l3}
Let $y_0= 0, \, y_1(x), \, y_2(x)$ be polynomial solutions of
\eqref{e1*} such that $y_1(x)\not\equiv 0$ and $y_2(x)\not\equiv 0.$
Set $y_1(x)=g (x)\tilde y_1(x)$ and $y_2(x)=g(x)\tilde y_2(x)$,
where $g=\gcd(y_1, y_2),$ and $\gcd(\tilde y_1, \, \tilde y_2)=1.$
Except the solution $y=0$, all the other solutions of equation
\eqref{e1*} can be expressed as
\begin{equation}\label{e123*}
y(x;c)=\frac{\tilde y_1(x) \, \tilde y_2(x)g(x)}{c\tilde
y_1(x)+(1-c)\tilde y_2(x)},
 \end{equation}
where $c$ is an arbitrary constant.
\end{lemma}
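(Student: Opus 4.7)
The plan is to derive formula \eqref{e123*} directly from the classical cross-ratio invariance \eqref{rr} applied to the four solutions $y_0 = 0$, $y_1(x)$, $y_2(x)$, and an arbitrary non-zero fourth solution $y(x)$ of \eqref{e1*}. First I would check that, on any open sub-interval where these four polynomials are pairwise distinct and admit a strict pointwise ordering, \eqref{rr} is a genuine identity for some constant; since its left-hand side is a rational function of $x$ (polynomial over polynomial), the identity persists as an equality of rational functions over $\F$. Rewritten with $y_0 = 0$ in the slot of ``$y_1$'' of \eqref{rr}, this gives
\[
\frac{y(x)\,\bigl(y_2(x) - y_1(x)\bigr)}{y_2(x)\,\bigl(y(x) - y_1(x)\bigr)} = k
\]
for some $k \in \F \cup \{\infty\}$.

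Next I would solve this relation algebraically for $y$. Clearing denominators and collecting terms yields
\[
y = \frac{k\, y_1\, y_2}{y_1 + (k-1)\, y_2}.
\]
Substituting $y_1 = g\, \tilde y_1$ and $y_2 = g\, \tilde y_2$ cancels one factor of $g$, producing
\[
y = \frac{k\, g\, \tilde y_1\, \tilde y_2}{\tilde y_1 + (k-1)\, \tilde y_2}.
\]
Finally, setting $c = 1/k$ and dividing numerator and denominator by $k$ recovers precisely \eqref{e123*}. The boundary values are consistent: $c=0$ (i.e.\ $k=\infty$) returns $y=y_1$, $c=1$ (i.e.\ $k=1$) returns $y=y_2$, and $c=\infty$ recovers the excluded solution $y=0$. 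Conversely, every $c \in \F$ produces a solution of \eqref{e1*}, since the M\"obius-type action on the solution space of a Riccati equation ensures that the assignment $c \mapsto y(\,\cdot\,;c)$ is a bijection between $\F\cup\{\infty\}$ and the family of all solutions.

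The main obstacle I anticipate is analytical: equation \eqref{rr} is stated for real solutions subject to a strict pointwise ordering on an open interval, whereas here the cross-ratio identity must hold as an equality of rational functions over $\F$ (including the complex case). The resolution is routine: as long as no two of $y_0, y_1, y_2, y$ coincide identically, one can find an open sub-interval on which they are pairwise distinct and strictly ordered, and on that sub-interval the (possibly permuted) version of \eqref{rr} equates two rational functions of $x$, so the identity holds globally. With this justification in place, every remaining step is a straightforward algebraic manipulation.
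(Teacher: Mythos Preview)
Your argument is correct, but the paper takes a different and somewhat more direct route. Rather than invoking the cross-ratio identity \eqref{rr}, the paper exploits the special form of \eqref{e1*}: since $y=0$ is a solution, the substitution $z=1/y$ linearizes the equation to $-a(x)\dot z = b_1(x)z + b_2(x)$. Then $z-z_1$ and $z_2-z_1$ both satisfy the associated homogeneous linear equation, so $z = z_1 + c(z_2-z_1)$ for some constant $c$, and inverting gives $y = y_1 y_2/(c y_1 + (1-c) y_2)$ immediately.

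The trade-off: your approach is shorter on paper because it cites \eqref{rr} as a black box, but you then have to patch the analytical hypotheses of \eqref{rr} (real ordering on an interval, extension to $\F=\C$ by rational-function identity), which you correctly flag as the main obstacle. The paper's linearization argument sidesteps this entirely: it is purely formal, works verbatim over either $\R$ or $\C$, and needs no ordering or interval restriction. In effect the paper re-proves the special case of cross-ratio invariance it needs, tailored to the situation where one of the four solutions is $y=0$, and this turns out to be cleaner than quoting the general statement.
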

\begin{proof} Let $y$ be a nonzero solution of equation \eqref{e1*}.
The functions $z=1/y$, $z_1=1/y_1,$ and $z_2=1/y_2$ are solutions of
a linear differential equation and satisfy
\[
-a(x)\dot z=b_1(x)z+b_2(x), \quad -a(x)\dot z_1=b_1(x)z_1+b_2(x), \quad -a(x)\dot z_2=b_1(x)z_2+b_2(x).
\]
It follows that
\[
\frac{\dot z(x)-\dot z_1(x)}{z(x)-z_1(x)}=\frac{\dot z_2(x)-\dot z_1(x)}{z_2(x)-z_1(x)}.
\]
Consequently
\[
z(x)=z_1(x)+c(z_2(x)-z_1(x)),
\]
with $c$ an arbitrary constant. So, the general solution of \eqref{e1*} is
\[
y(x;c)=\frac{y_1(x) \, y_2(x) }{c y_1(x)+(1-c) y_2(x)},
\]
with $c$ an arbitrary constant. The proof ends substituting $y_1(x)=g(x)\, \tilde y_1(x)$ and
$y_2(x)=g\tilde y_2(x)$ into this last expression.
\end{proof}

\begin{lemma}\label{l5}
Denote by $N(g)$ the number of different zeros of $g$. The following
statements hold.
\begin{itemize}
\item[$(i)$] If for any value of $c$, $c\tilde y_1(x)+(1-c)\tilde y_2(x)$
is not a constant, then equation \eqref{e1*} has at most $N(g)+3$
polynomial solutions.
\item[$(ii)$] If there exists a value $c_0$ such that $c_0\tilde
y_1(x)+(1-c_0)\tilde y_2(x)$ is a constant, then
equation~\eqref{e1*} has at most $N(g)+4$ polynomial solutions.
\end{itemize}
\end{lemma}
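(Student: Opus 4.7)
The plan is to parametrize polynomial solutions via Lemma~\ref{l3}. Writing $D_c(x):=c\tilde y_1(x)+(1-c)\tilde y_2(x)$, every nonzero polynomial solution of \eqref{e1*} takes the form $y(x;c)=\tilde y_1\tilde y_2 g/D_c$, with $y(x;0)=y_1$ and $y(x;1)=y_2$. Counting $0$, $y_1$ and $y_2$ as three ``free'' solutions, the task reduces to counting the values $c\in\F\setminus\{0,1\}$ for which $y(x;c)$ is a polynomial.

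The first step is a divisibility reduction. For $c\ne 1$ one has $\gcd(D_c,\tilde y_1)=\gcd((1-c)\tilde y_2,\tilde y_1)=1$ by coprimality of $\tilde y_1$ and $\tilde y_2$; symmetrically, $\gcd(D_c,\tilde y_2)=1$ when $c\ne 0$. Consequently, for $c\notin\{0,1\}$, $y(x;c)\in\F[x]$ if and only if $D_c\mid g$ in $\F[x]$.

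Under hypothesis~$(i)$, $D_c$ is never constant, so $D_c\mid g$ forces $D_c$ to share at least one zero with $g$. For each zero $x^*$ of $g$, the condition $D_c(x^*)=0$ is linear in $c$ and has at most one solution, since $\tilde y_1(x^*)$ and $\tilde y_2(x^*)$ cannot both vanish. This yields at most $N(g)$ admissible values of $c$ and the bound $N(g)+3$. Under hypothesis~$(ii)$, the distinguished value $c_0$ produces an additional polynomial solution $y(x;c_0)=\tilde y_1\tilde y_2 g/D_{c_0}$; here $D_{c_0}$ is a nonzero constant, since the identity $D_{c_0}\equiv 0$ would force $\tilde y_1$ and $\tilde y_2$ to be proportional and hence both constant, which via \eqref{e1*} is incompatible with $b_2\not\equiv 0$ and $y_1\ne y_2$. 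For the remaining values $c\notin\{0,1,c_0\}$ the previous counting applies verbatim, yielding at most $N(g)$ further solutions and the total $N(g)+4$.

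The main subtlety will be verifying that the map $c\mapsto y(\cdot;c)$ is injective on the admissible parameter set---so that distinct $c$'s really correspond to distinct polynomial solutions---and ruling out the degenerate possibility that $D_c$ is a constant for two different values of $c$. Both statements reduce to the coprimality of $\tilde y_1$ and $\tilde y_2$ combined with $y_1\ne y_2$: for injectivity, $y(x;c_1)=y(x;c_2)$ gives $D_{c_1}=D_{c_2}$, whence $(c_1-c_2)(\tilde y_1-\tilde y_2)=0$; for uniqueness of $c_0$, a similar manipulation forces $\tilde y_1-\tilde y_2$ to be constant and then, by the earlier argument, $\tilde y_1$ and $\tilde y_2$ both constant.
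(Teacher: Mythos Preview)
Your proof is correct and follows essentially the same route as the paper's: parametrize via Lemma~\ref{l3}, reduce the question for $c\notin\{0,1\}$ to the divisibility condition $D_c\mid g$, and bound the admissible $c$'s by assigning to each a root of $g$. Your treatment is slightly more explicit than the paper's in two places---you justify $\gcd(D_c,\tilde y_i)=1$ and you check injectivity of $c\mapsto y(\cdot;c)$---and your uniqueness argument for $c_0$ invokes the ambient hypothesis $b_2\not\equiv 0$ (not stated in the lemma but used throughout the section), whereas the paper argues via a linear-combination identity; both arguments handle the same degenerate case where $\tilde y_1,\tilde y_2$ are simultaneously constant.
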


\begin{proof} From Lemma~\ref{l3} the general solution of equation \eqref{e1*}
is \eqref{e123*}. Since $\gcd(\tilde y_1, \tilde y_2)=1$, in order that $y(x;c)$ is a polynomial
solution of equation \eqref{e1*} we must have $c=0$, or $c=1$, or $c\ne 0, \, 1 $ and $c\tilde
y_1+(1-c)\tilde y_2$ divides $g$. Since $\deg g\le \eta$, it follows that $g$ has at most $\eta$
zeros. We distinguish two cases depending on whether the denominator of \eqref{e123*} can be a
constant or not.

\medskip

\noindent $\bullet$ [\emph{Case of denominator never being a constant}]: For any $c$, the
polynomial $c\tilde y_1(x)+(1-c)\tilde y_2(x)$ is never a constant. Let $x^*$ be a root of $g$. In
order that $c\tilde y_1+(1-c)\tilde y_2$ divides $g$ for some $c$, we must have
\[
c\tilde y_1(x^*)+(1-c)\tilde y_2(x^*)=0.
\]
If $\tilde y_1(x^*)=\tilde y_2(x^*)$ then from the above relation we get that both must be zero,
but this is impossible because $\gcd(\tilde y_1, \tilde y_2)=1$. If $\tilde y_1(x^*)-\tilde
y_2(x^*)\ne 0$, solving this last equation gives $c^*=\tilde y_2(x^*)/(\tilde y_2(x^*)-\tilde
y_1(x^*))$. If $c^*\tilde y_1+(1-c^*)\tilde y_2$ divides $g$, we have the polynomial solution
$y(x;c^*)$ given in \eqref{e123*}. Thus we have at most $N(g)$ different values of $c$ such that
$c\tilde y_1+(1-c)\tilde y_2$ divides $g$. As a consequence, \eqref{e1} has at most $N(g)$ number
of polynomial solutions together with the solutions $0$, $y_1$ and $y_2$.

\medskip

\noindent $\bullet$ [\emph{Case of some constant denominator}]: There exists a $c_0$ such that
$c_0\tilde y_1(x)+(1-c_0)\tilde y_2(x)$ is constant. First we claim that this $c_0$ is unique. If
not, there are two numbers $c_1, c_2$ such that
\begin{align*}
c_1\tilde y_1(x)+(1-c_1)\tilde y_2(x)\equiv d_1, \\
c_2\tilde y_1(x)+(1-c_2)\tilde y_2(x)\equiv d_2,
\end{align*}
for some constants $d_1$ and $d_2$. Then we have
\begin{equation}\label{ecoef24}
(c_1d_2-c_2d_1)\tilde y_1(x)+(d_2-d_1-c_1d_2+c_2d_1)\tilde
y_2(x)\equiv 0.
\end{equation}
If the coefficients of $\tilde y_1$ and $\tilde y_2$ in \eqref{ecoef24} are not all zero, then one
of $\tilde y_1$ and $\tilde y_2$ is a multiple of the other. This is in contradiction with the
fact that $\gcd(\tilde y_1, \tilde y_2)=1$. If the coefficients in \eqref{ecoef24} are both zero,
we have $d_1=d_2$ and $c_1=c_2$. This proves the claim.

As in the proof of the previous case, we now have $N(g)+1$ possible values of $c$ for which
$g/(c\tilde y_1+(1-c)\tilde y_2)$ are polynomials. This implies that equation \eqref{e1*} has at
most $N(g)+4$ nonzero polynomial solutions.
\end{proof}

\begin{proposition}\label{pr:case1}
Let $a(x), b_i(x)$ be polynomials of degrees $\alpha=\deg a$,
$\beta_i=\deg b_i$, $i=1, 2$, and
$\eta=\max\{\alpha, \beta_1, \beta_2\}\geq 2$ with $b_2(x)\not\equiv
0$. Then equation \eqref{e1*} has at most $\eta+1$ polynomial
solutions.
\end{proposition}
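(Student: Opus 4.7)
The plan is to combine Lemmas~\ref{l1}--\ref{l5} in a case analysis, reducing the problem to bounding $N(g)$ for a well-chosen pair of nonzero polynomial solutions. By Lemma~\ref{l2}, I would first assume $b_0\equiv 0$ so that $y\equiv 0$ is a polynomial solution; if there are at most two polynomial solutions in total, the conclusion is automatic since $\eta+1\geq 3$. Otherwise, fixing two distinct nonzero polynomial solutions $y_1,y_2$ and writing $y_i=g\tilde y_i$ with $g=\gcd(y_1,y_2)$, Lemma~\ref{l5} reduces matters to showing $N(g)\leq\eta-2$ in case~(i) and $N(g)\leq\eta-3$ in case~(ii).

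The key observation is that each additional polynomial solution corresponds via Lemma~\ref{l3} to a value $c\neq 0,1$ with $\tilde d_c:=c\tilde y_1+(1-c)\tilde y_2$ dividing $g$, and crucially, the polynomials $\tilde d_c$ for distinct $c$ are pairwise coprime---a common root would force both $\tilde y_1$ and $\tilde y_2$ to vanish there, contradicting $\gcd(\tilde y_1,\tilde y_2)=1$. Setting $n:=\max(\deg\tilde y_1,\deg\tilde y_2)$, Lemma~\ref{l1}'s bound $\deg y_i=\deg g+\deg\tilde y_i\leq\eta-\beta_2$ together with $\deg\tilde d_c=n$ (generically) forces $\deg g+n\leq\eta-\beta_2$; pairwise coprimality then gives at most $\deg g/n$ new $c$-values. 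Consequently, the total number of polynomial solutions in case~(i) is at most $3+\deg g/n\leq (\eta-\beta_2)/n+2$, which is $\leq\eta+1$ whenever $n\geq 2$ (using $\eta\geq 2$) or $\beta_2\geq 1$. In case~(ii) with $c_0\notin\{0,1\}$, the extra polynomial solution $y(c_0)$ has degree $\deg g+2n\leq\eta-\beta_2$ by Lemma~\ref{l1}, strengthening the bound to $\deg g\leq\eta-\beta_2-2n$ and yielding the same count.

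The remaining sub-case is $n=1$ and $\beta_2=0$, where the direct degree bound falls short by one unit; this is the main obstacle. Here $\tilde y_1,\tilde y_2$ are linear polynomials with distinct roots $s_1,s_2$, both of which are zeros of $a$ by Lemma~\ref{l4} applied to $y_1$ and $y_2$. Consequently $\{s_1,s_2\}\cup\mathrm{Roots}(g)$ is a set of distinct zeros of $a$ of cardinality at most $\alpha\leq\eta$. Since each additional polynomial solution for $c\neq 0,1,c_0$ corresponds (via the unique root of the linear polynomial $\tilde d_c$) to a root of $g$ distinct from $s_1$ and $s_2$, inclusion--exclusion delivers the desired total count $\leq\eta+1$. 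In case~(ii) with $c_0\notin\{0,1\}$, the same argument applied to the polynomial solution $y(c_0)$ (whose zeros comprise $s_1,s_2$, and the roots of $g$) closes the residual gap.
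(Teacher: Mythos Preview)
Your overall strategy---combining the pairwise coprimality of the $\tilde d_c:=c\tilde y_1+(1-c)\tilde y_2$ with the degree bound from Lemma~\ref{l1} and the root criterion from Lemma~\ref{l4}---is sound and very close in spirit to the paper's proof. The general case ($n\ge 2$ or $\beta_2\ge 1$) and case~(i) of the residual sub-case $n=1$, $\beta_2=0$ go through as you describe.

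The gap is in case~(ii) of that residual sub-case. With $c_0\notin\{0,1\}$ both $\tilde y_1,\tilde y_2$ are linear with roots $s_1\ne s_2$, and your count is $4+|R|$ with $R=\mathrm{Roots}(g)\setminus\{s_1,s_2\}$. The inclusion $\{s_1,s_2\}\cup\mathrm{Roots}(g)\subseteq\mathrm{Roots}(a)$ yields only $|R|\le\eta-2$, hence a total of at most $\eta+2$, one too many. Your proposed fix---applying Lemma~\ref{l4} to $y(c_0)$---adds nothing: the zero set of $y(c_0)=g\tilde y_1\tilde y_2/q$ is exactly $\{s_1,s_2\}\cup\mathrm{Roots}(g)$, the same set already used, so no new root of $a$ is produced and the bound stays at $\eta+2$. (The sub-case $c_0\in\{0,1\}$, i.e.\ one $\tilde y_i$ constant, which you do not treat explicitly, stalls in the same way with only one $s_i$ available.)

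The paper closes this with a different device: an explicit computation of $b_2$. If one restarts with the pair $(y_1,y(c_0))$ when $s_1\notin\mathrm{Roots}(g)$, the new $\gcd$ is $h=g\tilde y_1$ with $N(h)=\eta-1=\deg h$; writing $a=h\,\tilde a$ (possible since $h$ has simple roots, all zeros of $a$ by Lemma~\ref{l4}) gives
\[
b_2(x)=\frac{\tilde a(x)\bigl(\tilde y_1\dot{\tilde y}_2-\tilde y_2\dot{\tilde y}_1\bigr)}{\tilde y_1\tilde y_2(\tilde y_2-\tilde y_1)},
\]
where the new $\tilde y_i$ have degree $\le 1$. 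The numerator then has degree $\le 1$ and the denominator degree $\ge 2$, contradicting that $b_2$ is a nonzero constant. This rules out the configuration (at most two solutions). When both $s_1,s_2\in\mathrm{Roots}(g)$ one has $|R|\le N(g)-2\le\deg g-2\le\eta-4$ directly, so $4+|R|\le\eta$. Inserting this $b_2$-computation step would complete your argument.
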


\begin{proof}
We assume that equation \eqref{e1*} has three different polynomial
solutions $0,y_1, y_2$. Otherwise the statement follows. Recall that
$g=\gcd(y_1, y_2)$ and it is monic, see Lemma~\ref{l3}. Then, $\deg
g\le \max\{\deg y_1, \deg y_2\}\le \eta$ and $N(g)\le \eta$. We
will split our study in the three cases: $N(g)=\eta,$
$N(g)=\eta-1,$ or $N(g)\le \eta-2.$
\medskip

\noindent $\bullet$ [\emph{Case $N(g)=\eta$}]: Then $\deg g=\deg y_1=\deg y_2=\eta$. Hence
\[
y_1(x)=p\,g(x), \qquad y_2(x)=q\, g(x), \qquad p, q \in \F^*=\F\setminus\{0\}.
\]
It follows from Lemma~\ref{l3} that all solutions of equation
\eqref{e1*} are of the form
\[
y(x;c)=k_c \,g(x), \qquad k_c\in \F.
\]
This forces that $b_2(x)\equiv 0$, a contradiction with the assumption $b_2(x)\not\equiv 0$.

\medskip

\noindent $\bullet$ [\emph{Case $N(g)=\eta-1$}]:
 We split the proof in two cases:

\smallskip

\noindent $(i)$ \emph{$\deg g=\eta$ or $\deg y_1=\deg y_2=\eta-1$}:
We have $y_1=p\, g, \, y_2=q\, g$ with $p, q \in \F^*$. Similarly
with the case $N(g)=\eta$, we get $b_2(x)\equiv 0$, again a
contradiction.

\smallskip

\noindent $(ii)$ \emph{$\deg g=\eta-1$ and $\, \max(\deg y_1, \deg
y_2)=\eta$}: We have that $y_i= g \, \tilde y_i$ with $\deg \tilde
y_i\le1,$ $i=1, 2.$ Moreover, by Lemma~\ref{l4} it holds that $a= g
\, \tilde a$ with $\deg \tilde a\le 1.$

Since $y_1$ and $y_2$ are solutions of equation \eqref{e1*}, we get from
\[
a(x)\dot y_1=b_1(x)y_1+b_2(x) y_1^2, \quad a(x)\dot
y_2=b_1(x)y_2+b_2(x) y_2^2,
\]
that
\begin{equation}\label{e5}
b_2(x)=\frac{a(x)\big(y_1(x)\dot y_2(x)-y_2(x) \dot
y_1(x)\big)}{y_1(x)y_2(x)(y_2(x)-y_1(x))}= \frac{\tilde a(x)\big(\tilde y_1(x)\dot{\tilde
y}_2(x)-\tilde y_2(x) \dot{\tilde y}_1(x)\big)}{\tilde y_1(x)\tilde y_2(x)(\tilde y_2(x)-\tilde
y_1(x))}.
\end{equation}
By Lemma~\ref{l1}, $b_2(x)\equiv p$ for some constant number. Thus
\[
p\, \tilde y_1(x)\tilde y_2(x)(\tilde y_2(x)-\tilde y_1(x))=\tilde a(x)\big(\tilde
y_1(x)\dot{\tilde y}_2(x)-\tilde y_2(x) \dot{\tilde y}_1(x)\big).
\]
Notice that $\tilde y_1\dot{\tilde y}_2-\tilde y_2 \dot{\tilde y}_1$ is always a nonzero
constant, because $\, \max(\deg \tilde y_1, \deg \tilde y_2)=1$ and $\gcd(\tilde y_1, \tilde
y_2)=1$. Hence, in the above formula, the left-hand side has degree at least two while the
right-hand side has degree at most one, a contradiction.

Hence under all the above hypotheses the maximum number of polynomial solutions is $2\le \eta+1$,
for $\eta\ge 1$, as we wanted to prove.

\medskip

\noindent $\bullet$ [\emph{Case $N(g)\le\eta-2$}]: If $c\, \tilde
y_1(x)+(1-c)\, \tilde y_2(x)$ is nonconstant, for any value of $c$,
the result holds from statement $(i)$ of Lemma~\ref{l5}. If
$N(g)<\eta-2$ the result also follows from Lemma~\ref{l5}.

Hence we assume that $N(g)=\eta-2$ and that there exists a constant
$c_0,$ with $c_0\ne 0, \, 1$, such that
\[
c_0\tilde y_1(x)+(1-c_0)\tilde y_2(x)\equiv q\in\F^*.
\]
Note that if $q=0$, then $\tilde y_1$ and $\tilde y_2$ will have a
common factor, it is in contradiction with $\gcd(\tilde y_1, \tilde
y_2)=1$. We recall that $y(x;0)=y_1(x)$ and $y(x;1)=y_2(x)$. Now
equation~\eqref{e1*} also has the polynomial solution
\[
y_3(x)=\tilde y_1(x)\tilde y_2(x) g(x)/q.
\]
From $N(g)=\eta-2$ we get that $\deg g\ge \eta-2$.

Since any polynomial solution of equation \eqref{e1} has degree at most $\eta$, by the assumptions
and the expressions of $y_1$ and $y_2$ we get that $\tilde y_1$ and $\tilde y_2$ are polynomials
of the same degree $d\in\{0, 1, 2\}$. But it follows from the expression of $y_3$ that $d=2$ is not
possible, otherwise the polynomial solution $y_3(x)$ would have degree at least $\eta+2$, a
contradiction.

If $d=0$, then $y_1$ and $y_2$ are constant multiples of $g$.
Similarly to the proof of case $N(g)=\eta$ we get that $b_2(x)\equiv
0$, again a contradiction.

When $d=1$, if $\tilde y_1$ or $\tilde y_2$ has a zero which is not a zero of $g$, we assume
without loss of generality that $\tilde y_1$ satisfies this condition. Set
\[
h(x):=\gcd(y_1(x), y_3(x))=\tilde y_1(x)\, g(x).
\]
Then $N(h)=\eta-1$. In this situation we can start again our proof
taking $y_1$ and $y_3$ instead of $y_1$ and $y_2$ and we are done.

Finally, assume that $d=1$ and each zero of $\tilde y_1$ and $\tilde y_2$ is a zero of $g$. Then,
since any polynomial solution of equation \eqref{e1*} has degree at most $\eta$, we get from the
expression of $y_3$ that $\deg g\le \eta-2$. Thus $\deg g=\eta-2$. So we have
\begin{equation}\label{e8}
g(x)=(x-x_1)\cdots (x-x_{\eta-2}) \quad \text{ with } x_i\ne x_j \text{ for } 1\le i\ne j\le
\eta-2.
\end{equation}
Since the zeros of $\tilde y_1$ and $\tilde y_2$ are zeros of $g(x)$, we can assume that
\begin{equation}\label{e9}
\tilde y_1=p_1(x-x_1), \quad \tilde y_2=p_2(x-x_2) \quad \text{ with
} p_1, p_2 \, \, \text{ nonzero constants}.
\end{equation}
Thus in order that $y(x;c)$ with $c\ne 0, \, 1$, is a polynomial
solution of equation \eqref{e1*}, we get from \eqref{e123*},
\eqref{e8}, and \eqref{e9} that
\[
c\tilde y_1(x)+(1-c)\tilde y_2(x)=r_i(x-x_i) \quad \text{ with } i\in\{3, \ldots, \eta-2\},
\]
where each $r_i$ is a nonzero constant. Clearly the number of possible values of $c$ satisfying
these conditions is exactly $\eta-4$. This proves that equation \eqref{e1*} has $\eta$ polynomial
solutions, including $0$, $y_1, \, y_2$ and $y_3$.
\end{proof}

\begin{proposition}\label{pr:case2}
Let $a(x), b_i(x)$ be polynomials of degrees $\alpha=\deg a$,
$\beta_i=\deg b_i$, $i=1, 2$, and
$\eta=\max\{\alpha, \beta_1, \beta_2\}\leq 1$ with $b_2(x)\not\equiv
0$. Then equation \eqref{e1*} has at most $2$ polynomial solutions.
\end{proposition}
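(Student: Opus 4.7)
The plan is to use the fact that $y\equiv 0$ is always a solution of \eqref{e1*}, so it suffices to show that \eqref{e1*} admits at most one nonzero polynomial solution when $\eta\le 1$ and $b_2\not\equiv 0$. I would argue by contradiction: suppose $y_1$ and $y_2$ are two distinct nonzero polynomial solutions. By Lemma~\ref{l1}, $\deg y_i\le \eta-\deg b_2\le 1$, so each $y_i$ is affine.

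I would first dispose of the easy sub-case in which Lemma~\ref{l1} forces $\deg y_i=0$; this covers every situation with $\deg b_2\ge 1$, and also the case $\eta=0$. Substituting the constant $y_i=c_i\in\F^*$ into \eqref{e1*} yields $b_1(x)+b_2(x)c_i\equiv 0$, i.e.\ $c_i=-b_1/b_2$, so $c_1=c_2$, a contradiction.

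The remaining sub-case is $\deg b_2=0$ and $\eta=1$, where the $y_i$ may be genuinely linear. I would use the polynomial identity obtained by writing $a\dot y_i/y_i=b_1+b_2 y_i$ for $i=1,2$ and subtracting (this is exactly the identity displayed in \eqref{e5}):
\[
b_2(x)\,y_1(x)\,y_2(x)\,\bigl(y_2(x)-y_1(x)\bigr)=a(x)\bigl(y_1(x)\dot y_2(x)-y_2(x)\dot y_1(x)\bigr).
\]
The key observation is that, since each $y_i$ is affine, the Wronskian-type expression $W(x):=y_1\dot y_2-y_2\dot y_1$ is a constant $K\in\F$. If $K\ne 0$, the right-hand side has degree $\alpha\le 1$, while the left-hand side has degree $\deg b_2+\deg y_1+\deg y_2+\deg(y_2-y_1)\ge 2$ (whenever $K\ne 0$ at least one of $y_1,y_2$ is truly linear, and a short case check on $y_2-y_1$ confirms the bound); this is impossible. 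If $K=0$, then $y_2=\lambda y_1$ for some $\lambda\in\F^*$; plugging into \eqref{e1*} and comparing with $\lambda$ times the equation for $y_1$ gives $(\lambda^2-\lambda)\,b_2\, y_1^2\equiv 0$, whence $\lambda=1$ and $y_1=y_2$, again a contradiction.

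The main obstacle is precisely the last sub-case, $\deg b_2=0$ with $\eta=1$, where affine (non-constant) solutions are genuinely allowed and direct substitution is not enough; one must extract the Wronskian identity from \eqref{e5} and run the degree comparison above. Once this is in place the argument closes uniformly.
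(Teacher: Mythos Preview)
Your argument is correct. Both proofs rely on Lemma~\ref{l1} to bound $\deg y_i\le 1$ and on the identity \eqref{e5}, but the organization differs. The paper splits first by the value of $\eta\in\{0,1\}$ and then, for $\eta=1$, performs a three-way case analysis on the pair $(\deg y_1,\deg y_2)$, invoking Lemma~\ref{l3} (to force $b_2\equiv 0$ when the two nonzero solutions are proportional) and Lemma~\ref{l4} (to force $\deg a\ge 2$ when $y_1,y_2$ are both linear with distinct roots). You instead organize the split by whether Lemma~\ref{l1} already forces the $y_i$ to be constants, and in the remaining situation you handle all degree configurations at once via the Wronskian form of \eqref{e5}: the constant $K=y_1\dot y_2-y_2\dot y_1$ either is nonzero, giving a clean degree mismatch, or vanishes, giving proportionality $y_2=\lambda y_1$ and hence $\lambda=1$ directly from the equation. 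This bypasses Lemmas~\ref{l3} and~\ref{l4} entirely. Your route is a little more self-contained and uniform; the paper's route has the advantage of reusing the machinery (Lemmas~\ref{l3} and~\ref{l4}) already set up for the general case $\eta\ge 2$.
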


\begin{proof}
We prove the statement in two different cases: $\eta=0$ and
$\eta=1.$

\medskip

\noindent $\bullet$ [\emph{Case $\eta=0$}]: Equation \eqref{e1*} has constant coefficients, and it
can be written as
\[
\dot y=p \, y (y-q),
\]
with $p\ne 0$ and $q$ constants. This equation has two constant solutions for $q\ne 0$ and one
constant solution for $q=0.$ The proof of this case finishes, using Lemma~\ref{l1}, because any
polynomial solution should be constant.

\medskip

\noindent $\bullet$ [\emph{Case $\eta=1$}]: By contradiction we assume that equation \eqref{e1*}
has three polynomial solutions $0, \, y_1,$ and $y_2$. Lemma~\ref{l1} shows that $\max\{\deg y_1,
\deg y_2\}\le 1$. The proof is done with a case by case study on the degrees of $y_1$ and $y_2.$

\smallskip

\noindent $(i)$ If $y_1$ and $y_2$ are both linear, then they cannot
have a common zero, otherwise it follows from Lemma~\ref{l3} that
all solutions are constant multiple of $y_1$, and so $b_2(x)\equiv
0$, a contradiction. Since $y_1$ and $y_2$ have different zeros, it
follows from Lemma~\ref{l4} that $a$ has two different zeros. This
means that $a$ has degree at least $2$, again a contradiction with
$\eta=1$. These contradictions imply that equation \eqref{e1*} has
never two nonzero polynomial solutions of the given form.

\smallskip

\noindent $(ii)$ If $y_1$ and $y_2$ are both constants, we get from
Lemma~\ref{l3} that all solutions of equation~\eqref{e1} are
constants. And so $b_2(x)\equiv 0$, again a contradiction.

\smallskip

\noindent $(iii)$ Assume that one of $y_1$ and $y_2$ is linear and
another is a constant. Without loss of generality we assume that
$y_1$ is linear and $y_2(x)=p\ne 0$ a constant. Now we have by
Lemma~\ref{l4} that $a(x)=r\, y_1(x)$ with $r$ a nonzero constant.
From \eqref{e5} we have
\[
b_2(x)=\frac{r \dot y_1(x)}{y_1(x)-p},
\]
that is not a polynomial, because $\dot y_1$ is a nonzero constant. This contradiction shows that
equation \eqref{e1} has neither two nonzero polynomial solutions of the given form.
\end{proof}

\begin{proposition}\label{examplej} Let $x_i,$ $i=1, \ldots, \eta$ be different
values in $\F.$ For each $j\in\{2, \ldots, \eta+1\},$ consider the
polynomial $a(x)=-(x-x_1)^{\eta+2-j}(x-x_2)\cdots (x-x_{j-1})$ of
degree $\eta.$ Then the differential equation \eqref{e1} with
$b_0(x)\equiv0,$ $b_1(x)=\dot a(x),$ and $b_2(x)\equiv 1$
has exactly $j$ polynomial solutions.
\end{proposition}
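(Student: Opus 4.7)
The plan is to reduce the equation $a(x)\dot y=\dot a(x)\,y+y^2$ to a separable one via the substitution $z=y/a$. Wherever $a(x)\ne 0$ one has $(y/a)' = (\dot y\,a - y\,\dot a)/a^2$, so dividing the ODE by $a^2$ gives
$$\frac{d}{dx}\!\left(\frac{y}{a}\right)=\left(\frac{y}{a}\right)^{2}.$$
The autonomous equation $\dot z=z^2$ has the complete solution set $\{z\equiv 0\}\cup\{\,z(x)=-1/(x-c):c\in\F\,\}$, so every solution of the Riccati equation on an interval where $a$ does not vanish is either identically $0$ or of the form $y(x)=-a(x)/(x-c)$.

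From this description I read off the polynomial solutions: $y\equiv 0$ is one, and $-a(x)/(x-c)$ is a polynomial exactly when $(x-c)$ divides $a(x)$, i.e., when $c$ is a root of $a$. By the definition
$$a(x)=-(x-x_1)^{\eta+2-j}(x-x_2)\cdots(x-x_{j-1}),$$
the set of distinct roots of $a$ is precisely $\{x_1,\ldots,x_{j-1}\}$, of cardinality $j-1$. Hence the nonzero polynomial candidates are
$$y_i(x):=-\frac{a(x)}{x-x_i},\qquad i=1,\ldots,j-1,$$
each of which is a genuine polynomial (using $\eta+2-j\ge 1$ for $i=1$, and the simplicity of $x_i$ in $a$ for $i\ge 2$) and solves the equation by direct substitution. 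The $y_i$ are pairwise distinct and nonzero, so together with $y\equiv 0$ we obtain exactly $j$ polynomial solutions.

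To rule out any further polynomial solution I would use that a polynomial is determined by its values on any nonempty open interval: since $a\not\equiv 0$ there is an interval $\mathcal I$ on which $a$ does not vanish, and any polynomial solution restricted to $\mathcal I$ must coincide there with one of the $j$ functions already listed, hence coincide with it globally. An equivalent route, internal to the paper, is to take the three known solutions $y_0\equiv 0$, $y_1$, $y_2$, compute $g=\gcd(y_1,y_2)$ with the linear coprime pair $\tilde y_1=-(x-x_2),\ \tilde y_2=-(x-x_1)$, and apply Lemma~\ref{l3} together with case~$(i)$ of Lemma~\ref{l5}: the denominator in \eqref{e123*} is always linear, and enumerating the values of $c$ for which this denominator cancels a root of the numerator reproduces the same total of $j$ polynomial solutions. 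The only delicate point is legitimizing the step $z=y/a$ at the zeros of $a$, and this is exactly what either of the two arguments above handles.
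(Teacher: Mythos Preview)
Your proof is correct and follows essentially the same approach as the paper: both reduce the equation to an elementary ODE and obtain the general nonzero solution $y(x)=-a(x)/(x-c)$, then read off the polynomial solutions from the distinct roots of $a$. The only cosmetic difference is that you substitute $z=y/a$ to reach $\dot z=z^2$, whereas the paper substitutes $w=a/y$ and obtains the even simpler $\dot w=-1$; you are also more explicit than the paper about why no further polynomial solutions can appear.
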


\begin{proof}
Equation \eqref{e1} with $b_0(x)\equiv 0, \, b_1(x)=\dot a(x)$ and
$b_2(x)\equiv 1$ has the solution $y=0.$ When $y\ne 0$, \eqref{e1}
can be written as
\[
\frac{d}{dx} \left(\frac {a(x)} {y}\right)=-1.
\]
It has a general solution
\[
y(x)=\frac{-a(x)}{x-c}=\frac{(x-x_1)^{\eta+2-j}(x-x_2)\cdots (x-x_{j-1})}{x-c},
\]
with $c$ an arbitrary constant. For each $j\in\{2, \ldots, \eta+1\},$
choosing $c= x_i$, $i=1, \ldots, j-1$, we get exactly $j-1$ polynomial
solutions
\[
y_i=(x-x_1)^{\eta+2-j}\cdots (x-x_{i-1})\widehat{(x-x_i)} (x-x_{i+1}) \cdots (x-x_{j-1}), \quad i=1, \ldots, j-1,
\]
where $\widehat{(x-x_i)}$ denotes the absence of the factor. Then
equation \eqref{e1} has exactly $j$ polynomial solutions including
the trivial one $y=0$.
\end{proof}

\begin{proof}[Proof of Theorem~\ref{t1}]
When equation~\eqref{e1} has no polynomial solutions we are done. Otherwise we can apply
Lemma~\ref{l2} and restrict our analysis to equation~\eqref{e1*}. Propositions~\ref{pr:case1} and
\ref{pr:case2} provide the first part of the statement, which corresponds to the upper bound of
the statement when the degree is $\eta\geq 2$ and $\eta\leq 1,$ respectively. Proposition~\ref{examplej}
gives a concrete polynomial differential equation with exactly $j$ polynomial solutions for each
$j\in\{2, \ldots, \eta+1\}.$ The cases $j\in\{0, 1\}$ are trivial. These facts prove the second part
of the statement.
\end{proof}

\section{Trigonometric polynomial Riccati equations}\label{s3}

This section is devoted to prove Theorem~\ref{t2}. As in the previous section, first we give some
technical results. Since the proof of Theorem~\ref{t2} is different for $\eta\ge 2$ and $\eta=1$,
we distinguish these two cases.

Let $F(\theta)\in \R_t[\theta]$ be a real trigonometric polynomial,
recall that it is said that $F(\theta)$ has \emph{degree} $\nu$ if
$\nu$ is the degree of its associated Fourier series,
i.e.
\[
F(\theta)=\sum\limits_{k=-\nu}\limits^\nu f_ke^{k \theta \mathbf i},
\quad \text{ with } \mathbf i=\sqrt{-1},
\]
where $f_k=\overline f_{-k}\in\C$, $k\in \{1, \ldots, \nu\}$, and
$f_\nu$ is nonzero, see for instance \cite{Br2004, CGM2013}. As we
have already explained, $\R_t[\theta]$ is not a unique factorization
domain and this fact complicates our proofs. The next lemma provides
the image, by the isomorphism $\Phi$ given in \eqref{iso}, of the
ring $\R_t[\theta]$ in $\R(x)$, see for instance \cite[Lemma
10]{CGM2013} and \cite[Lemma 17]{GGL2013}. In fact, it is one of the
key points in the proof of Theorem~\ref{t2} because the map $\Phi$
moves a polynomial in $\sin \theta, \cos \theta$ into a rational
function in $x,$ such that the numerator has a unique decomposition
as a product of irreducible polynomials. Another minor difference
between this case and the polynomial one is that the degree remains
invariant by derivation when we consider trigonometric polynomials.

For sake of shortness, in this paper we do not treat the case of complex trigonometric
polynomial Riccati equations.

\begin{lemma}\label{l000}
Set $F(\theta)\in\R_t[\theta]$ with $\deg F=\nu$. Then
\[
\Phi(F(\theta))=\frac{f(x)}{(1+x^2)^\nu},
\]
with $\gcd(f(x), 1+x^2)=1$ and $\deg f\le 2\nu$. Conversely, any
rational function $g(x)/(1+x^2)^\nu$ with $g(x)$ an arbitrary
polynomial of degree no more than $2\nu$ can be written as a
trigonometric polynomial through the inverse change, $\Phi^{-1}.$
\end{lemma}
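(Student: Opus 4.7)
The plan is to prove both directions by direct computation, using that $\Phi$ is a ring homomorphism together with the factorization $1+x^2=(1+\mathbf ix)(1-\mathbf ix)$.

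First, I would observe
\[
\Phi(\cos\theta+\mathbf i\sin\theta)=\frac{1-x^2+2\mathbf ix}{1+x^2}=\frac{(1+\mathbf ix)^2}{(1+\mathbf ix)(1-\mathbf ix)}=\frac{1+\mathbf ix}{1-\mathbf ix},
\]
so that $\Phi(e^{\mathbf ik\theta})=\bigl((1+\mathbf ix)/(1-\mathbf ix)\bigr)^k$ for every $k\in\Z$. Writing the Fourier expansion $F(\theta)=\sum_{k=-\nu}^{\nu}f_ke^{\mathbf ik\theta}$ with $f_\nu\ne 0$ and $\overline{f_k}=f_{-k}$, and clearing the common denominator $(1+x^2)^\nu=(1+\mathbf ix)^\nu(1-\mathbf ix)^\nu$, one arrives at
\[
\Phi(F(\theta))=\frac{f(x)}{(1+x^2)^\nu},\qquad f(x):=\sum_{k=-\nu}^{\nu}f_k(1+\mathbf ix)^{\nu+k}(1-\mathbf ix)^{\nu-k}.
\]
Each summand has degree $2\nu$, so $\deg f\le 2\nu$, and the symmetry $\overline{f_k}=f_{-k}$ forces $f(x)\in\R[x]$.

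To verify the coprimality condition $\gcd(f,1+x^2)=1$, I would evaluate $f$ at $x=\mathbf i$. Since $1+\mathbf ix$ vanishes there and $1-\mathbf ix$ takes the value $2$, the only surviving term in the sum is $k=-\nu$, giving $f(\mathbf i)=2^{2\nu}f_{-\nu}$. The hypothesis $\deg F=\nu$ forces $f_\nu\ne 0$, and hence $f_{-\nu}=\overline{f_\nu}\ne 0$. As $f$ is real, this also gives $f(-\mathbf i)\ne 0$, so neither root of $1+x^2$ is a root of $f$, and the claimed coprimality follows.

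For the converse, short direct computations give $\Phi^{-1}(1/(1+x^2))=(1+\cos\theta)/2$ and $\Phi^{-1}(x)=\sin\theta/(1+\cos\theta)$. Hence, for each $0\le j\le 2\nu$,
\[
\Phi^{-1}\!\left(\frac{x^j}{(1+x^2)^\nu}\right)=\frac{\sin^j\theta\,(1+\cos\theta)^{\nu-j}}{2^\nu}.
\]
When $0\le j\le\nu$ this is already an element of $\R_t[\theta]$. When $\nu<j\le 2\nu$, setting $j=\nu+m$ with $1\le m\le\nu$ and applying the identity $\sin^2\theta=(1-\cos\theta)(1+\cos\theta)$ absorbs the $m$ negative powers of $1+\cos\theta$ against the factor $\sin^{2m}\theta$ extracted from $\sin^{\nu+m}\theta$ (which is available since $\nu+m\ge 2m$). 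Summing over $j$ then shows that $\Phi^{-1}(g(x)/(1+x^2)^\nu)\in\R_t[\theta]$ for arbitrary $g$ of degree at most $2\nu$.

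The only real obstacle is bookkeeping: one has to handle an even/odd parity split in the last step, and one must invoke $f_\nu\ne 0$ at exactly the right point to ensure $f(\mathbf i)\ne 0$. Everything else follows routinely from the ring homomorphism property of $\Phi$ and from the two identities $1+x^2=(1+\mathbf ix)(1-\mathbf ix)$ and $\sin^2\theta=(1-\cos\theta)(1+\cos\theta)$.
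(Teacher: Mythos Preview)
Your argument is correct and self-contained. Note, however, that the paper does not supply its own proof of this lemma: it simply cites \cite[Lemma~10]{CGM2013} and \cite[Lemma~17]{GGL2013}. So there is no in-paper argument to compare against; you have in effect filled in what the authors outsourced.

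A couple of minor remarks on your write-up. First, in the converse you might mention explicitly that each of the resulting expressions, namely $\sin^{j}\theta\,(1+\cos\theta)^{\nu-j}/2^{\nu}$ for $0\le j\le\nu$ and $(1-\cos\theta)^{m}\sin^{\nu-m}\theta/2^{\nu}$ for $1\le m\le\nu$, has trigonometric degree at most $\nu$; this is implicit in the paper's use of the lemma and is immediate from counting factors, but worth stating. Second, the phrase ``even/odd parity split'' in your final paragraph is slightly misleading: your argument for $j>\nu$ works uniformly by extracting $\sin^{2m}\theta$, with no genuine parity dichotomy needed. Otherwise the bookkeeping is clean and the use of $f_\nu\ne 0$ to force $f(\mathbf i)\ne 0$ is exactly the right pivot for the coprimality claim.
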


In equation \eqref{e1R}, set
\begin{equation}\label{eq:defab}
A(\theta)=\frac{a(x)}{(1+x^2)^\alpha}, \quad B_i(\theta)=\frac{b_i(x)}{(1+x^2)^{\beta_i}},
\end{equation}
where $\alpha=\deg A,$ $\deg a \le 2\alpha,$ $\beta_i=\deg B_i,$ and
$\deg b_i\le 2\beta_i$ for $i=0, 1, 2$. By the assumption of Theorem~\ref{t2} we have
$\eta=\max\{\alpha, \beta_0, \beta_1, \beta_2\}$.

Using Lemma~\ref{l000} we can transform the trigonometric polynomial Riccati differential equation
\eqref{e1R} into a polynomial Riccati differential equation \eqref{e1}, see the proof of
Lemma~\ref{l4-1}. Then we can apply Theorem~\ref{t1} to this polynomial differential equation. It
can be seen that in this way we can prove (we omit the details) that an upper bound of the
trigonometric polynomial solutions of equation \eqref{e1R} is $6\eta+1$. This upper bound is much
higher than the actual one given in Theorem~\ref{t2}.

Although the outline for proving Theorem~\ref{t2} will be the same
that we have followed in Section~\ref{s2}, we will see that in this
case the proof is much more involved.

We start providing an upper bound on the degree of trigonometric polynomial solutions
of the trigonometric polynomial equation \eqref{e1R}. Notice that, for this first result, we do not use the map $\Phi.$

\begin{lemma}\label{l1-1}
If $Y_0(\theta)$ is a real trigonometric polynomial solution of the real
trigonometric polynomial equation \eqref{e1R} of degree $\eta$, then
$\deg(Y_0(\theta))\le \eta-\deg(B_2(\theta)).$
\end{lemma}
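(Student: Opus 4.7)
The plan is to mimic the proof of Lemma~\ref{l1}, adjusting only for the fact that differentiation in $\R_t[\theta]$ preserves degree rather than lowering it by one. I would first record two basic facts about arithmetic in $\R_t[\theta]$: for $F,G\in\R_t[\theta]$ one has $\deg(FG)=\deg F+\deg G$ (the top Fourier coefficient of a product is the product of the top Fourier coefficients), and $\deg(F+G)\le \max(\deg F,\deg G)$ with strict inequality occurring only if the leading Fourier coefficients cancel. Crucially, if $\deg F=\nu\ge 1$ then $\deg F'=\nu$ as well, since differentiation multiplies the top coefficient $f_\nu$ by $\mathbf i\nu\ne 0$. This is the only substantive difference with the polynomial case treated in Lemma~\ref{l1}, whose degree list $(\alpha+\delta-1;\beta_0,\beta_1+\delta,\beta_2+2\delta)$ must here be replaced by $(\alpha+\delta;\beta_0,\beta_1+\delta,\beta_2+2\delta)$.

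Setting $\delta=\deg Y_0$, the case $\delta=0$ is trivial because the desired bound reduces to $\beta_2\le\eta$, which is immediate from the definition of $\eta$. For $\delta\ge 1$, equation~\eqref{e1R} evaluated at $Y_0$ forces
\[
\alpha+\delta\le \max\{\beta_0,\ \beta_1+\delta,\ \beta_2+2\delta\},
\]
and I would then split into the same two cases as in Lemma~\ref{l1}. If the right-hand maximum equals $\alpha+\delta$, then $\beta_2+2\delta\le\alpha+\delta$, so $\deg(B_2Y_0)=\beta_2+\delta\le\alpha\le\eta$. If instead the maximum strictly exceeds $\alpha+\delta$, then since the left-hand side has degree exactly $\alpha+\delta$, the top Fourier coefficients on the right-hand side at that maximum degree must cancel; hence the maximum is attained by at least two of the three candidates. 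Each of the resulting equalities $\beta_1+\delta=\beta_2+2\delta$, $\beta_0=\beta_2+2\delta$, or $\beta_0=\beta_1+\delta$ yields $\beta_2+\delta\le\eta$ by the same short arithmetic used in Lemma~\ref{l1}.

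I do not anticipate any serious obstacle in carrying out this plan, since the entire argument is a degree count in which the ring structure of $\R_t[\theta]$ enters only through the additivity of degree under multiplication and its invariance under differentiation. In particular the non-UFD nature of $\R_t[\theta]$, which is the main source of difficulty in the rest of Section~\ref{s3}, is irrelevant at this stage.
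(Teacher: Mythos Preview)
Your proposal is correct and follows exactly the paper's approach: the paper's proof of Lemma~\ref{l1-1} consists precisely of recording the identities $\deg(PQ)=\deg P+\deg Q$ and $\deg P'=\deg P$, listing the degree tuple $(\alpha+\delta;\beta_0,\beta_1+\delta,\beta_2+2\delta)$, and then referring back to the case analysis of Lemma~\ref{l1}. Your write-up simply spells that case analysis out rather than citing it.
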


\begin{proof} Given two trigonometric polynomials $P$ and $Q$ it holds that
 $\deg(P(\theta)Q(\theta))=\deg(P(\theta))+\deg(Q(\theta))$ and $\deg(P'(\theta))=\deg(P(\theta)).$
Set $\delta=\deg Y_0$ then the degrees of the four components in
\eqref{e1R} are respectively
\[
(\alpha+\delta;\, \, \beta_0, \, \, \beta_1+\delta, \, \, \beta_2+2\delta).
\]
The proof follows, as in Lemma~\ref{l1}, comparing the degrees of the four terms in both sides of equation \eqref{e1R}. Notice that the only difference is that the first degree of the above list is one larger than the corresponding list given in \eqref{eq:4degrees}.
\end{proof}

Next two results are the equivalent versions of Lemmas~\ref{l2} and
\ref{l4} for trigonometric Riccati equations. We only prove the
second one because the proof of the first one is essentially the
same.

\begin{lemma}\label{l2-1}
If a trigonometric polynomial equation~\eqref{e1R} of degree $\eta$
has a trigonometric polynomial solution, then it is equivalent to an
equation with $B_0(\theta)\equiv 0$ and the same degree:
\begin{equation}\label{e1*-1}
A(\theta) Y'=B_1(\theta)Y+B_2(\theta)Y^2.
\end{equation}
Moreover, each trigonometric polynomial solution of equation~\eqref{e1R} corresponds to a
trigo\-nometric polynomial solution of the new equation~\eqref{e1*-1}.
\end{lemma}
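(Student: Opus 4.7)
The plan is to imitate the proof of Lemma~\ref{l2} with polynomial arithmetic in $\F[x]$ replaced by arithmetic in $\R_t[\theta]$, and with Lemma~\ref{l1} replaced by its trigonometric analogue Lemma~\ref{l1-1}. Let $Y_0(\theta)$ be a trigonometric polynomial solution of \eqref{e1R}. I would introduce the translation $W(\theta) := Y(\theta) - Y_0(\theta)$ and substitute $Y = W + Y_0$ into \eqref{e1R}. Using that $Y_0$ itself satisfies \eqref{e1R} to cancel the $B_0$ and $Y_0$-only terms, a direct expansion yields
\begin{equation*}
A(\theta)\, W' = \bigl(B_1(\theta) + 2 B_2(\theta) Y_0(\theta)\bigr) W + B_2(\theta)\, W^2,
\end{equation*}
which is of the form \eqref{e1*-1} with $\widetilde B_0 \equiv 0$, $\widetilde B_1 := B_1 + 2 B_2 Y_0$, and $\widetilde B_2 := B_2$.

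Next I would check that the transformed equation still has degree $\eta$. The coefficients $A$ and $\widetilde B_2 = B_2$ are untouched, so only $\deg \widetilde B_1 \le \eta$ requires verification. Since $\R_t[\theta]$ satisfies $\deg(PQ) = \deg P + \deg Q$ (as recalled at the start of Lemma~\ref{l1-1}), applying Lemma~\ref{l1-1} to $Y_0$ gives $\deg Y_0 \le \eta - \deg B_2$, hence $\deg(B_2 Y_0) = \deg B_2 + \deg Y_0 \le \eta$. Combined with the obvious bound $\deg(P+Q) \le \max\{\deg P, \deg Q\}$ in $\R_t[\theta]$, this yields $\deg \widetilde B_1 \le \max\{\deg B_1, \deg(B_2 Y_0)\} \le \eta$, as required.

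The correspondence between trigonometric polynomial solutions is then immediate: since $Y_0 \in \R_t[\theta]$, the affine map $Y \mapsto W = Y - Y_0$ is a bijection between $\R_t[\theta]$-solutions of \eqref{e1R} and $\R_t[\theta]$-solutions of the new equation, and this bijection sends one equation to the other by the computation above. I expect no substantial obstacle here: the proof is formally identical to that of Lemma~\ref{l2}, which is exactly why the authors remark that the details can be omitted. The only point where one must be mildly careful is the bookkeeping of trigonometric degrees, where one relies on the additivity of the degree under products and the invariance of the degree under differentiation in $\R_t[\theta]$—both facts that are recorded in the proof of Lemma~\ref{l1-1}.
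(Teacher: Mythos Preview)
Your proof is correct and follows exactly the approach the paper intends: the authors explicitly omit the proof of Lemma~\ref{l2-1}, remarking that it is ``essentially the same'' as that of Lemma~\ref{l2}, and your translation $W=Y-Y_0$ together with the degree bound from Lemma~\ref{l1-1} is precisely that argument transported to $\R_t[\theta]$.
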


\begin{lemma}\label{l4-1}
If $Y_1(\theta)$ is a nonconstant real trigonometric polynomial
solution of equation \eqref{e1*-1}, set
\[
Y_1(\theta)=\frac{y_1(x)}{(1+x^2)^{\eta_1}}\quad \text{ with } \quad \gcd(y_1(x), 1+x^2)=1,
\]
where $\eta_1$ is the degree of $Y_1$ and $\deg y_1\le 2\eta_1$,
then any irreducible factor of $y_1(x)$ is a factor of the
polynomial $a(x)$ defined in \eqref{eq:defab}.
\end{lemma}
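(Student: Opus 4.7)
My plan is to transport the identity \eqref{e1*-1} to the field $\R(x)$ via the isomorphism $\Phi$ from \eqref{iso}, clear every denominator so as to obtain an equality of polynomials in $x$, and then analyse this equality by computing $p$-adic valuations at an arbitrary irreducible factor $p(x)$ of $y_1(x)$. If $y_1$ is a nonzero constant the statement is vacuous, so I may assume $y_1$ has at least one irreducible factor.

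First I would compute $Y_1'(\theta)$ in the coordinate $x=\tan(\theta/2)$. Since $d\theta/dx=2/(1+x^2)$, a direct differentiation of $Y_1=y_1/(1+x^2)^{\eta_1}$ yields
\[
Y_1'(\theta)=\frac{(1+x^2)\,y_1'(x)-2\eta_1 x\, y_1(x)}{2(1+x^2)^{\eta_1}}.
\]
Substituting this together with \eqref{eq:defab} into \eqref{e1*-1} and multiplying both sides by $2(1+x^2)^M$ for an integer $M$ large enough to clear every denominator, I would obtain a polynomial identity of the shape
\[
a(x)\bigl[(1+x^2)\,y_1'(x)-2\eta_1 x\, y_1(x)\bigr](1+x^2)^{M_1}=2\,b_1(x)\,y_1(x)\,(1+x^2)^{M_2}+2\,b_2(x)\,y_1^2(x)\,(1+x^2)^{M_3},
\]
with nonnegative integer exponents $M_1,M_2,M_3$ depending only on $\alpha,\beta_1,\beta_2,\eta_1$.

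Next I would fix an irreducible factor $p(x)$ of $y_1(x)$ and write $y_1=p^kh$ with $k=v_p(y_1)\ge 1$ and $\gcd(p,h)=1$, where $v_p$ is the $p$-adic valuation on $\R[x]$. Two observations are crucial: from $\gcd(y_1,1+x^2)=1$ one has $v_p(1+x^2)=0$; and since $\R$ has characteristic zero and $\deg p\ge 1$, one has $p\nmid p'$. The product rule gives $y_1'=p^{k-1}(k\,p'\,h+p\,h')$, and since $p\nmid p'$, $p\nmid h$ and $k\ne 0$, this shows $v_p(y_1')=k-1$ exactly.

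Finally I would compare the $p$-adic valuations of the two sides of the displayed polynomial identity. The right-hand side is visibly divisible by $y_1$, so its $p$-valuation is at least $k$. On the left-hand side, $v_p(2\eta_1 x\, y_1)\ge k>k-1=v_p((1+x^2)y_1')$, hence $v_p\bigl[(1+x^2)y_1'-2\eta_1 x\, y_1\bigr]=k-1$, giving $v_p(\text{LHS})=v_p(a)+k-1$. Matching the two valuations forces $v_p(a)\ge 1$, that is, $p\mid a$, which is exactly the claim. The one slightly delicate point, and the main obstacle, is establishing the \emph{exact} equality $v_p(y_1')=k-1$: it relies critically on characteristic zero (to secure $p\nmid p'$) and on the coprimality $\gcd(p,h)=1$; everything else is routine bookkeeping with powers of $1+x^2$.
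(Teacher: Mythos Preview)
Your proof is correct and takes essentially the same route as the paper: transform to the $x$-variable via $\Phi$, compute $Y_1'$ in terms of $y_1$, substitute into \eqref{e1*-1} to obtain a rational/polynomial identity, and read off that any irreducible factor of $y_1$ must divide $a$. The paper's version is terser---after displaying the identity it simply asserts the divisibility using $\gcd(y_1,1+x^2)=1$---whereas your $p$-adic valuation computation spells this step out explicitly and carefully.
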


\begin{proof} From the transformation $\Phi$, we get
\[
x'=\frac{dx}{d\theta}=\frac{1+x^2}{2},
\]
it follows that
\[
{Y_1}'(\theta)=\frac{\dot y_1(x)(1+x^2)-2 \eta_1 x y_1(x)}{2(1+x^2)^{\eta_1}},
\]
where the dot and prime denote the derivative with respect to $x$
and $\theta,$ respectively. So equation \eqref{e1*-1} becomes
\[
\frac{a(x)}{2(1+x^2)^{\alpha}}\left(\dot y_1(x)(1+x^2)-2\eta_1 x y_1(x)\right) =
\frac{b_1(x) y_1(x)}{(1+x^2)^{\beta_1}} +
\frac{b_2(x)(y_1(x))^2}{(1+x^2)^{\beta_2+\eta_1}}.
\]
This last equality shows that each irreducible factor of $y_1(x)$ is
a factor of $a(x)$, where we have used the fact that
$\gcd(y_1(x), 1+x^2)=1$. The lemma follows.
\end{proof}

Next five lemmas split the essential difficulties for proving Theorem~\ref{t2}.

\begin{lemma}\label{l3-1} $($i\,$)$ Assume that equation \eqref{e1*-1} has at least three different
trigonometric polynomial solutions $Y_0(\theta)\equiv 0, \,
Y_1(\theta), \, Y_2(\theta)$. Set
\begin{equation}\label{e226}
 Y_1(\theta)=\frac{y_1(x)}{(1+x^2)^{\eta_1}}, \quad
Y_2(\theta)=\frac{y_2(x)}{(1+x^2)^{\eta_2}},
\end{equation}
where $\eta_1=\deg Y_1, \, \eta_2=\deg Y_2$, $\deg y_i\le 2\eta_i,$
$\eta_1\le \eta_2$ and $\gcd(y_i(x), 1+x^2)=1$ for $i=1, 2$. Except
the solution $Y_0(\theta)\equiv 0$, all the other solutions of
equation \eqref{e1*-1} can be expressed as
\begin{align*}
Y(\theta;c)=&\frac{y_1(x) \, y_2(x) }{c y_1(x) (1+x^2)^{\eta_2}+(1-c) y_2(x)(1+x^2)^{\eta_1}}\\
=&\frac{g(x)\, \tilde y_1(x) \, \tilde y_2(x) }{(1+x^2)^{\eta_1}\big(c \tilde y_1(x)
 (1+x^2)^{\eta_2-\eta_1}+(1-c) \tilde y_2(x)\big)}, \quad c\in\R,
\end{align*}
where $g=\gcd( y_1, y_2),$ i.e. $y_i= g\, \tilde y_i$, with $\gcd(
\tilde y_1, \tilde y_2)=1$.

\medskip

\noindent $($ii$\,)$ Assume that equation \eqref{e1*-1} has at least
four different trigonometric polynomial solutions. Then we can
always choose two of them, say $Y_1(\theta)$ and $Y_2(\theta)$, of
the same degree $\eta_1=\eta_2$, and then all the solutions of
\eqref{e1*-1}, except $Y_0(\theta)\equiv 0$, are
\begin{equation}\label{e123*-11}
Y(\theta;c)=\frac{y_1(x) \, y_2(x) }{(1+x^2)^{\eta_1}\big(c y_1(x)
+(1-c) y_2(x)\big)}= \frac{g(x)\, \tilde y_1(x) \, \tilde y_2(x)
}{(1+x^2)^{\eta_1}r_c(x)},
 \quad c\in\R, \end{equation}
where $r_c(x):=c \tilde y_1(x)+(1-c) \tilde y_2(x).$
\end{lemma}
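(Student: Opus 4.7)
The plan is to mirror the proof of Lemma~\ref{l3}, transferring every identity from $\R_t[\theta]$ to $\R(x)$ via the isomorphism $\Phi$ in~\eqref{iso} so that the lack of unique factorization in $\R_t[\theta]$ does not obstruct the divisibility reasoning.

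Part~$(i)$ uses only the analytic cross-ratio step of Lemma~\ref{l3}, which has nothing to do with the polynomial structure. Setting $Z = 1/Y$ and $Z_j = 1/Y_j$ for $j = 1, 2$ turns equation~\eqref{e1*-1} into a linear equation for $Z$, whose general solution is $Z = Z_1 + c(Z_2 - Z_1)$; inverting gives
$$Y(\theta; c) = \frac{Y_1(\theta) Y_2(\theta)}{c Y_1(\theta) + (1-c) Y_2(\theta)}, \quad c \in \R.$$
Substituting the representations in~\eqref{e226}, multiplying numerator and denominator by $(1+x^2)^{\eta_1+\eta_2}$, and then extracting the common factors $g(x)$ and $(1+x^2)^{\eta_1}$ produces both announced expressions.

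Part~$(ii)$ reduces to the following claim: among at least three nonzero trigonometric polynomial solutions $Y_1, Y_2, Y_3$, two must share the same degree. Once such a pair is found, formula~\eqref{e123*-11} follows from part~$(i)$ with $\eta_2 - \eta_1 = 0$. Suppose for contradiction that the degrees satisfy $\eta_1 < \eta_2 < \eta_3$. By part~$(i)$ applied to $Y_0 \equiv 0, Y_1, Y_2$, there is $c \in \R \setminus \{0, 1\}$ with
$$Y_3 \bigl( c Y_1 + (1-c) Y_2 \bigr) = Y_1 Y_2.$$
Because addition of trigonometric polynomials satisfies $\deg(P+Q) = \max(\deg P, \deg Q)$ whenever the two degrees differ and the leading coefficient of the larger is nonzero, the factor $c Y_1 + (1-c) Y_2$ has trigonometric degree exactly $\eta_2$. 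Writing $\Phi(Y_i) = y_i(x)/(1+x^2)^{\eta_i}$ with $\gcd(y_i, 1+x^2) = 1$, a direct computation gives
$$\Phi\bigl( c Y_1 + (1-c) Y_2 \bigr) = \frac{c\, y_1(x) (1+x^2)^{\eta_2-\eta_1} + (1-c)\, y_2(x)}{(1+x^2)^{\eta_2}} =: \frac{N(x)}{(1+x^2)^{\eta_2}},$$
and reducing $N$ modulo $1 + x^2$ (where only the $(1-c) y_2$ term survives) shows $\gcd(N, 1+x^2) = 1$. Clearing denominators in the $\Phi$-image of the displayed identity leaves
$$y_3(x)\, N(x)\, (1+x^2)^{\eta_1} = y_1(x)\, y_2(x)\, (1+x^2)^{\eta_3}.$$
Since each of $y_1, y_2, y_3, N$ is coprime with $1+x^2$, comparing the $(1+x^2)$-adic valuations of the two sides forces $\eta_1 = \eta_3$, contradicting $\eta_1 < \eta_3$.

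The main obstacle is precisely the failure of unique factorization in $\R_t[\theta]$, which blocks any direct divisibility argument at the trigonometric level. The isomorphism $\Phi$ circumvents this by moving the problem to $\R(x)$, where $1+x^2$ is an irreducible polynomial that is automatically coprime with every numerator produced by Lemma~\ref{l000}; tracking the multiplicity of $1+x^2$ along an identity then yields the clean degree comparison that drives the contradiction in part~$(ii)$.
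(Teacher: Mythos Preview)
Your proof is correct and follows essentially the same route as the paper. Part~$(i)$ is identical, and for part~$(ii)$ both arguments hinge on the observation that when $\eta_1<\eta_2$ the polynomial $N(x)=c\,\tilde y_1(x)(1+x^2)^{\eta_2-\eta_1}+(1-c)\tilde y_2(x)$ is coprime with $1+x^2$ (since modulo $1+x^2$ only $(1-c)\tilde y_2$ survives); the paper phrases this as ``case~(b) never happens'' and concludes every $Y(\theta;c)$ with $c\ne1$ has degree $\eta_1$, while you package the same computation as a $(1+x^2)$-adic valuation comparison yielding $\eta_1=\eta_3$.
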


\begin{proof}
$(i)$ It follows similarly as Lemma~\ref{l3} and the expressions
\eqref{e226} of $Y_1(\theta)$ and $Y_2(\theta)$. The details are
omitted.

\medskip

\noindent $(ii)$ We can assume that we are in the situation of item $(i)$ with $\eta_1<\eta_2$,
because if $\eta_1=\eta_2$, we are done. Observe that
\[
Y(\theta;0)=Y_1(\theta)\quad\text{and}\quad Y(\theta;1)=Y_2(\theta),
\]
where none of the polynomials $g, \, \tilde y_1$ and $\tilde y_2$
can have the factor $1+x^2.$

Therefore in order for $Y(\theta;c)$ to be a real trigonometric polynomial solution of equation
\eqref{e1*-1} we must have $c=0$, or $c=1$, or $c\ne 0, \, 1 $ and it is such that
\begin{itemize}
\item[$(a)$] either $ s_c (x):=c \tilde y_1(x) (1+x^2)^{\eta_2-\eta_1}+(1-c) \tilde y_2(x)$
has no the factor $1+x^2$ and it divides $g(x)$;
\item[$(b)$]or $s_c (x)=\widehat{s}_c(x)(1+x^2)^\sigma$
with $0<\sigma\in\N,$ $\gcd(\widehat{s}_c(x), 1+x^2)=1$ and $\widehat{s}_c(x)$ divides $g(x)$.
\end{itemize}
Let us prove that case $(b)$ never happens. Otherwise, notice that since $\eta_2-\eta_1>0,$ the
polynomials $s_c(x)$ and $c \tilde y_1(x) (1+x^2)^{\eta_2-\eta_1}$, both have the factor
$(1+x^2)$. Then $\tilde y_2$ would also have the factor $1+x^2$, in contradiction with its
definition.

Therefore, only case $(a)$ happens and the degree of $Y(\theta, c), c\ne 1$ must be $\eta_1$. As a
consequence, taking one of the solutions $Y(\theta;c), c\ne0, 1$, together with $Y_1(\theta)$ both
have the same degree, that is $\eta_2=\eta_1$. Thus the expression proved in item $(i)$ reduces
to \eqref{e123*-11}, as want to prove.
\end{proof}

\begin{lemma}\label{l-new} Assume that equation \eqref{e1*-1} has at
least four different trigonometric polynomial solutions and all the
notations of Lemma~\ref{l3-1}. Then all the trigonometric polynomial
solutions of this equation, different from $Y_0(\theta)\equiv 0$,
have degree $\eta_1,$ except maybe one that can have higher degree.
If this solution exists, then it corresponds to a unique value of
$c,$ $c=\breve c$ such that
\[
r_{\breve c}(x) ={\widehat{r}_{\breve c}}(x) (1+x^2)^\nu, \quad 0<\nu\in\N.
\]
Moreover, if $Y(\theta;c),$ $c=0, 1, c_1, c_2\ldots, c_k$ denote all
its trigonometric polynomial solutions of degree $\eta_1$ and
$Y(\theta;\breve c)$ its trigonometric polynomial solution of higher
degree $\eta_1+\nu$ $($if it exists$)$. Then
\begin{equation}\label{factor-g}
g(x)=\widehat{r}_{\breve c}(x) \prod_{j=1}^k r_{c_j}(x) \check g(x),
\end{equation}
for some polynomial $\check g.$ Furthermore, for all $r_{c_j}$ except
maybe one of them, say $r_{c_k}=r_{\hat c}$ which can be a nonzero
constant polynomial, it holds that
\begin{equation}\label{e-rho}
\deg r_{c_j}=\rho\ge 1, \quad j=1, 2, \ldots, k-1.
\end{equation}
\end{lemma}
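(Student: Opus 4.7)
The plan is to exploit the parametrization from Lemma~\ref{l3-1}(ii),
\[
Y(\theta;c)=\frac{g(x)\,\tilde y_1(x)\,\tilde y_2(x)}{(1+x^2)^{\eta_1}\,r_c(x)},\qquad r_c(x):=c\,\tilde y_1(x)+(1-c)\,\tilde y_2(x),
\]
of every nonzero solution, and to read off divisibility and degree constraints from the requirement that $Y(\theta;c)$ be a trigonometric polynomial. First I would show that at most one value of $c$, call it $\breve c$, can make $r_c$ divisible by $1+x^2$: since $\gcd(\tilde y_j,1+x^2)=1$ gives $\tilde y_j(\mathbf i)\neq 0$, the real condition $(1+x^2)\mid r_c$ (equivalent to $r_c(\mathbf i)=0$) is linear in $c$, so it admits at most one solution. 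Writing $r_{\breve c}=\widehat r_{\breve c}(x)(1+x^2)^\nu$, only $Y(\theta;\breve c)$ can have trigonometric degree $\eta_1+\nu>\eta_1$, while every other trigonometric polynomial solution $Y(\theta;c_j)$ has degree exactly $\eta_1$.

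Next I would establish the factorization (\ref{factor-g}). For any $c_j\neq 0,1$, a common root of $r_{c_j}$ and $\tilde y_1$ would, through the identity $r_{c_j}=c_j\tilde y_1+(1-c_j)\tilde y_2$, force $\tilde y_2$ to vanish there too, contradicting $\gcd(\tilde y_1,\tilde y_2)=1$; a symmetric argument covers $\tilde y_2$. Hence the requirement that $Y(\theta;c_j)$ be a trigonometric polynomial reduces to $r_{c_j}\mid g$, and analogously $\widehat r_{\breve c}\mid g$. The same common-root trick applied to distinct pairs $r_{c_j},r_{c_l}$, and to $r_{c_j},\widehat r_{\breve c}$, shows these factors are pairwise coprime, so their product divides $g$, yielding (\ref{factor-g}).

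For the degree statement set $\rho:=\max(\deg\tilde y_1,\deg\tilde y_2)$. A leading-coefficient analysis shows $\deg r_c=\rho$ for all $c$, except at most one value $c^*\neq 0,1$, which can arise only when $\deg\tilde y_1=\deg\tilde y_2$ with distinct leading coefficients. The case $\rho=0$ is ruled out, because it would force $\tilde y_1,\tilde y_2$ to be nonzero constants and hence $Y_1,Y_2$ scalar multiples of each other; substituting $\lambda Y_1$ into~(\ref{e1*-1}) and using $B_2\not\equiv 0$ yields $\lambda^2=\lambda$, allowing at most two such nonzero solutions, which contradicts the four-solution hypothesis. Thus $\rho\geq 1$. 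If the potential exception $c^*$ lies among the $c_j$'s, we relabel it $c_k=\hat c$; combining the trigonometric-polynomial degree bound $\deg(g\tilde y_1\tilde y_2/r_{c_k})\leq 2\eta_1$ coming from Lemma~\ref{l000} with $\deg y_i\leq 2\eta_1$ and the divisibility $r_{c_k}\mid g$ forces $r_{c_k}$ to be either of degree $\rho$ (so the exception is vacuous) or a nonzero constant, establishing~(\ref{e-rho}).

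The main obstacle is this last step: excluding intermediate degrees for the exceptional $r_{c_k}$ requires tight bookkeeping of how the numerator and denominator bounds of Lemma~\ref{l000} interact with the factorization structure of $g$ given in (\ref{factor-g}), together with the relations $\deg y_i=\deg g+\deg\tilde y_i\leq 2\eta_1$.
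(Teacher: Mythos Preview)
Your argument is correct and follows essentially the same route as the paper: the parametrization \eqref{e123*-11}, uniqueness of $\breve c$ via the linear condition $r_c(\mathbf i)=0$, pairwise coprimality of the $r_{c_j}$'s by the two-equation trick, and then the divisibility $r_{c_j}\mid g$ and $\widehat r_{\breve c}\mid g$ to obtain \eqref{factor-g}. You are in fact more explicit than the paper on two points it leaves implicit: that each $r_{c_j}$ (for $c_j\neq 0,1$) is coprime to $\tilde y_1\tilde y_2$, and that $\rho=0$ is impossible because it would force $B_2\equiv 0$.

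The obstacle you flag at the end is not real: you are misreading the last clause of the statement. The phrase ``which can be a nonzero constant polynomial'' is descriptive, not a constraint to be proved. All that \eqref{e-rho} asserts is that $\deg r_{c_j}=\rho$ for $j=1,\ldots,k-1$ and $\rho\ge 1$; nothing is claimed about $\deg r_{c_k}$ beyond allowing it to differ from $\rho$ (possibly being zero). Your leading-coefficient analysis already shows that at most one $c\notin\{0,1\}$ can give $\deg r_c<\rho$; after relabelling that value as $c_k$, \eqref{e-rho} is established. No exclusion of intermediate degrees is needed, and the ``tight bookkeeping'' you anticipate can be dropped.
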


\begin{proof}
To study the degrees of all the trigonometric polynomial solutions,
observe that from equation \eqref{e123*-11} given in Lemma
\ref{l3-1} all the solutions have degree $\eta_1$ except the ones
corresponding to the values of $c$ such that $r_c$ has the factor
$1+x^2$ and for these values of $c$ their degrees will be greater
than $\eta_1$. Let us prove that this value of $c$, if exists, is
unique. If there were two values of $c\not\in\{0, 1\}$, say $c_1\ne
c_2$ then $r_{c_1}$ and $r_{c_2}$ evaluated at $x=\pm \mathbf i$
would vanish simultaneously. Some simple computations will give that
also both $\tilde y_1$ and $\tilde y_2$ would vanish at $x=\pm
\mathbf i$, a contradiction with $\gcd(\tilde y_1, \tilde y_2)=1.$

In fact, the above reasoning proves that for $c_1\ne c_2$, the
corresponding $r_{c_1}$ and $r_{c_2}$ have no common roots. This
shows that \eqref{factor-g} holds. Hence the lemma follows.
\end{proof}

\begin{lemma}\label{l5-1}
Assume that equation \eqref{e1*-1} has at least four different
trigonometric polynomial solutions and all the notations of
Lemmas~\ref{l3-1} and~\ref{l-new}. Denote by $N(g)$ the number of
different factors of $g(x)$ decomposed into linear complex
polynomial factors and let $\rho$ be given in \eqref{e-rho}. The
following statements hold.
\begin{itemize}
\item[$(i)$] If for any $c\in\R\setminus\{0, 1\}$, $r_c$ is never a constant multiple of $(1+x^2)^\sigma$, where $0\le\sigma\in\N,$ then equation \eqref{e1*-1} has at most $\min\big(N(g)+3, [\deg(g)/\rho]+3\big)$ trigonometric polynomial solutions, where $[\, \, ]$ denotes the integer part function.
\item[$(ii)$] If there is exactly one $c\not\in\{0, 1\}$ such that $r_c$ is a nonzero constant multiple of $(1+x^2)^\sigma$, with $0\le\sigma\in\N,$ then equation \eqref{e1*-1} has at most $\min\big(N(g)+4, [\deg(g)/\rho]+4\big)$ trigonometric polynomial solutions.
\item[$(iii)$] If there is one $c,$ $c=\breve c$ such that $r_{\breve c}(x)=p (1+x^2)^\sigma, 0<\sigma\in\N$ and one $c$, $c=\hat c$ such that $r_{\hat c}(x)=q$ for some nonzero constants $p$ and $q$, with $\breve c, \hat c\not\in\{0, 1\}$, then equation \eqref{e1*-1} has at most $\eta+2$ trigonometric polynomial solutions.
\end{itemize}
\end{lemma}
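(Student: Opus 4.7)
The plan is to combine the parametrization of all nonzero trigonometric polynomial solutions from Lemma~\ref{l3-1}(ii) with the factorization $g=\widehat r_{\breve c}\prod_{j=1}^k r_{c_j}\,\check g$ from Lemma~\ref{l-new}, and then count solutions by tracking which factors of $g$ are used up by the polynomials $r_{c_j}(x)=c_j\tilde y_1(x)+(1-c_j)\tilde y_2(x)$. The same coprimality argument used in Lemma~\ref{l-new} shows that for $c_1\ne c_2$ the polynomials $r_{c_1}$ and $r_{c_2}$ share no roots, so every $r_{c_j}$ appearing in \eqref{factor-g} is a pairwise-coprime divisor of $g$.

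For statement $(i)$, the hypothesis rules out both $\hat c$ (which would force $r_{\hat c}$ to be a nonzero constant, i.e.~a constant multiple of $(1+x^2)^0$) and any $\breve c$ with constant $\widehat r_{\breve c}$, so every $r_{c_j}$ has common degree $\rho\ge 1$. Degree counting then gives $k\rho\le\deg g$ and distinct-root counting gives $k\le N(g)$. Adding the three baseline solutions $Y_0\equiv 0,Y_1,Y_2$ — plus a possible higher-degree $Y(\theta;\breve c)$, whose $\widehat r_{\breve c}$ contributes further distinct roots and degree to $g$ — yields $\min(N(g)+3,[\deg g/\rho]+3)$. For statement $(ii)$, exactly one $c\notin\{0,1\}$ has $r_c$ proportional to $(1+x^2)^\sigma$ for some $\sigma\ge 0$; this is either $\hat c$ or $\breve c$ with constant $\widehat r_{\breve c}$. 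Neither type occupies a degree-$\rho$ slot in $g$, so the counting is the same as in $(i)$ but picks up one extra solution, giving $\min(N(g)+4,[\deg g/\rho]+4)$.

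Statement $(iii)$ is the main technical step. I first solve the $2\times 2$ linear system
\[
\breve c\,\tilde y_1+(1-\breve c)\tilde y_2=p(1+x^2)^\sigma, \qquad \hat c\,\tilde y_1+(1-\hat c)\tilde y_2=q,
\]
whose determinant $\breve c-\hat c\ne 0$, to express $\tilde y_1$ and $\tilde y_2$ as nonzero linear combinations of $(1+x^2)^\sigma$ and $1$; both therefore have exact degree $2\sigma$. Any other $r_c$ with $c\notin\{0,1,\breve c,\hat c\}$ then has the form $A(c)(1+x^2)^\sigma+B(c)$ with $A(c)B(c)\ne 0$, hence degree exactly $2\sigma$. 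The sharper degree estimate, which is the heart of the argument, comes from requiring that $Y(\theta;\hat c)=g\,\tilde y_1\,\tilde y_2/[(1+x^2)^{\eta_1}q]$ be a trigonometric polynomial of degree $\eta_1$: by Lemma~\ref{l000} this forces $\deg(g\,\tilde y_1\,\tilde y_2)\le 2\eta_1$, hence
\[
\deg g\le 2\eta_1-4\sigma,
\]
which is strictly sharper than the $2\eta_1-2\sigma$ that the $\breve c$ solution alone would provide. Letting $m$ be the number of trigonometric polynomial solutions beyond the five canonical ones $Y_0,Y_1,Y_2,Y(\theta;\breve c),Y(\theta;\hat c)$, each such $r_c$ is a pairwise-coprime degree-$2\sigma$ divisor of $g$, so $2\sigma m\le\deg g\le 2\eta_1-4\sigma$, i.e., $m\le \eta_1/\sigma-2$. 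Applying Lemma~\ref{l1-1} to the higher-degree solution gives $\eta_1+\sigma\le\eta$, and combining these,
\[
5+m\;\le\;3+\eta_1/\sigma\;\le\;3+(\eta-\sigma)/\sigma\;=\;2+\eta/\sigma\;\le\;\eta+2,
\]
with the worst case at $\sigma=1$.

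The main obstacle is recognizing that the simultaneous presence of the constant-$r_c$ solution $Y(\theta;\hat c)$ and the higher-degree solution $Y(\theta;\breve c)$ imposes a \emph{doubly tight} degree bound on $g$: missing the extra $-2\sigma$ coming from requiring $Y(\theta;\hat c)$ itself to be a trigonometric polynomial would only yield $\eta+3$, not the sharp $\eta+2$. Once this subtlety is seen, the rest of $(iii)$ is an elementary chain of inequalities.
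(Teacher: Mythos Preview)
Your proof is correct and follows essentially the same approach as the paper's. In particular, both proofs obtain the $N(g)$ bound in $(i)$ by noting that the polynomials $r_{c_j}$ (and $\widehat r_{\breve c}$ when present) are pairwise coprime divisors of $g$, and the $[\deg g/\rho]$ bound via the factorization \eqref{factor-g}; for $(iii)$, both solve the $2\times 2$ system to show $\deg\tilde y_1=\deg\tilde y_2=2\sigma$ and then bound the number of remaining solutions by imposing that $Y(\theta;\hat c)$ be a trigonometric polynomial of degree $\eta_1$. The only cosmetic difference is that in $(iii)$ the paper uses the cruder estimates $\deg r_{c_j}\ge 2$, $\deg\tilde y_i\ge 2$, and $\eta_1\le\eta-1$ (valid since $\sigma\ge 1$) to get $k\le\eta-3$ directly, whereas you track $\sigma$ throughout and reach the same bound $5+m\le 2+\eta/\sigma\le\eta+2$; the two computations are equivalent.
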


\begin{proof}
By item $(ii)$ of Lemma~\ref{l3-1}, along this proof we can assume that all nonzero solutions of the Riccati equation \eqref{e1*-1} are given by the expression \eqref{e123*-11}.

\medskip

\noindent $(i)$ Set $r_c(x) = \widehat{r}_c(x)(1+x^2)^\nu$ with
$\nu$ a nonnegative integer and $\widehat{r}_c(x)$ a nonconstant
polynomial satisfying $\gcd(\widehat{r}_c(x),1+x^2)=1$. Since $g(x)$
is a real polynomial and $\gcd(g(x), 1+x^2)=1$, in order that
$g(x)/r_c(x)$ with $\nu=0$ or $g(x)/\widehat{r}_c(x)$ with $\nu\ne
0$, is a polynomial, each zero $x_0$ of $g(x)$ must be a zero of
$r_c(x)$, i.e.
\[
r_c(x_0)=c\tilde y_1(x_0)(1+x_0^2)^{\eta_2-\eta_1}+(1-c)\tilde y_2(x_0)=0.
\]
Since $\gcd(\tilde y_1, \tilde y_2)=1$ and $\gcd(\tilde y_2, 1+x^2)=1$, this last equation has a
unique solution $c_0$. Hence we have proved that there are at most $N(g)$ values of $c$ for which
$g(x)/r_c(x)$ with $\nu=0$ or $g(x)/\widehat{r}_c(x)$ with $\nu\ne 0$ can be real polynomials.
Hence the general solution \eqref{e123*-11} of equation \eqref{e1*-1} contains at most $N(g)+3$
real trigonometric polynomial solutions including the trivial one $Y_0=0$, and $Y_1, \, Y_2$. This
proves the first part of item $(i).$ The second bound given by $[\deg(g)/\rho]+3$ follows from
\eqref{factor-g} and~\eqref{e-rho}.

\medskip

\noindent $(ii)$ The proof of this item follows adding to the
maximum number of trigonometric polynomial solutions given in item
$(i)$ the extra one corresponding to this special value of $c.$

\medskip

\noindent $(iii)$ We have that $r_{\breve c}(x)=p \, (1+x^2)^\sigma,
0<\sigma\in\N$ and $r_{\hat c}(x)=q$, with $p, q$ nonzero real
numbers. Then
\begin{align*}
\tilde y_1(x)= \frac{(1-\hat c)\, p \, (1+x^2)^\sigma -(1-\breve c)\, q
}{\breve c-\hat c}, \quad \tilde y_2(x)= \frac{-\hat c \, p\,
(1+x^2)^\sigma +\breve c \, q }{\breve c-\hat c}.
\end{align*}
As a consequence
\[
r_c(x)=\frac{(c-\hat c)\, p\, (1+x^2)^\sigma -(c-\breve c)\, q }{\breve c-\hat c},
\]
and then $\deg r_c =2\sigma\ge2,$ for all $c\ne\hat c.$ Since we are
assuming the existence of the value $c=\breve c$ for which the
equation has a trigonometric polynomial solution of degree strictly
greater than the degree $\eta_1$ of all the other trigonometric
polynomial solutions, we know that $\eta_1\le\eta-\sigma\le \eta-1.$
By Lemmas~\ref{l3-1} and \ref{l-new},
\[
Y(\theta;c)= \frac{g(x)\, \tilde y_1(x) \, \tilde y_2(x)
}{(1+x^2)^{\eta_1}r_c(x)}=\frac{\check g(x)\, \tilde y_1(x) \, \tilde
y_2(x)\, \prod_{j=1}^k r_{c_j}(x) }{(1+x^2)^{\eta_1}r_c(x)},
\]
where $k$ is the number of trigonometric polynomial solutions different from $0, Y_1(\theta),$
$Y_2(\theta),$ $Y(\theta;\breve c)$ and $Y(\theta;\hat c)$. We claim that $k\le \eta-3.$ If the
claim holds then the maximum number of trigonometric polynomial solutions is $(\eta-3)+5=\eta+2$
as we wanted to prove.

To prove the claim, notice that by imposing that for $c=\hat c$, the function $Y(\theta;\hat c)$
is a trigonometric polynomial. Since $r_{\hat c}(x)=q$, we get that
\[
D:=\deg\Big( \check g\, \tilde y_1 \, \tilde y_2\, {\prod_{j=1}^k} r_{c_j} \Big)\le 2\eta_1\le
2\eta-2.
\]
By using that for all $ j=1,..k$, $\deg r_{c_j} \ge 2$ and that $\deg \tilde y_1=\deg \tilde
y_2\ge2,$ we get that $D\ge 2(k+2)$. Thus $2(k+2)\le 2\eta-2$ and $k\le \eta-3,$ as we wanted to
prove.
\end{proof}

\begin{lemma}\label{lemma-b2}
$(i)$ Consider an equation of the form \eqref{e1*-1} and all the
notations introduced in Lemma~\ref{l3-1}. Assume that it has two
trigonometric polynomial solutions $Y_1(\theta)$ and $Y_2(\theta)$,
both of degree $\eta_1$. Then the function $b_2(x)$ appearing in the
expression of $B_2(\theta)$ in the $x$-variables is
\begin{equation}\label{exp-b2}
b_2(x)=(1+x^2)^{\beta_2+\eta_1+1-\alpha}\frac{\big(\dot{\tilde y}_1(x) \tilde y_2(x)-\tilde
y_1(x)\dot{\tilde y}_2(x) \big)a(x)}{2\, g(x)\, \tilde y_1(x)\tilde y_2(x)(\tilde y_1(x)-\tilde
y_2(x))}.
\end{equation}

\medskip

\noindent $(ii)$ If $\eta_1=\eta$ and the polynomial $g=\gcd(y_1,
y_2)$ has degree greater than or equal to $2\eta-1$ then
$b_2(x)\equiv 0.$ As a consequence, there are no trigonometric
polynomial Riccati equations of the form \eqref{e1*-1} and degree
$\eta$ with $B_2(\theta)\not\equiv0$ having two trigonometric
polynomial solutions of degree $\eta$ and such that their
corresponding $g$ satisfies $\deg g \ge 2\eta-1.$
\end{lemma}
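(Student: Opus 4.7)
The plan is to handle part (i) by the standard Wronskian-type elimination of $B_1$ between the Riccati equations satisfied by $Y_1$ and $Y_2$, and then to deduce part (ii) from part (i) by a $(1+x^2)$-divisibility analysis in $\R[x]$, exploiting the distinguished role of the irreducible polynomial $1+x^2$ under the isomorphism $\Phi$.

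For part (i), I would multiply the equation $A Y_i'=B_1 Y_i+B_2 Y_i^2$ by $Y_{3-i}$ for $i=1,2$ and subtract; this eliminates $B_1$ and gives
\[
A(\theta)\bigl(Y_1'(\theta)Y_2(\theta)-Y_2'(\theta)Y_1(\theta)\bigr)=B_2(\theta)\,Y_1(\theta)Y_2(\theta)\bigl(Y_1(\theta)-Y_2(\theta)\bigr),
\]
from which $B_2$ can be isolated. To pass to the $x$-variable I would use $dx/d\theta=(1+x^2)/2$ together with the formula
\[
Y_i'=\frac{\dot y_i(1+x^2)-2\eta_1 x\, y_i}{2(1+x^2)^{\eta_1}}
\]
already derived in the proof of Lemma~\ref{l4-1}. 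The key algebraic observation is that the $-2\eta_1 x y_i$ contributions cancel when forming the Wronskian, so that $Y_1'Y_2-Y_2'Y_1=(\dot y_1 y_2-\dot y_2 y_1)/(2(1+x^2)^{2\eta_1-1})$. Substituting $y_i=g\tilde y_i$, using $\dot y_1 y_2-\dot y_2 y_1=g^2(\dot{\tilde y}_1\tilde y_2-\tilde y_1\dot{\tilde y}_2)$, cancelling the common $g^2$ against $y_1y_2$, and setting $b_2=(1+x^2)^{\beta_2}B_2$, one arrives at \eqref{exp-b2}. This part is a routine computation.

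For part (ii), assume $\eta_1=\eta$ and $\deg g\ge 2\eta-1$. Lemma~\ref{l000} gives $\deg y_i\le 2\eta$, hence $\deg\tilde y_i=\deg y_i-\deg g\le 1$. If both $\tilde y_1$ and $\tilde y_2$ are constants, the Wronskian in the numerator of \eqref{exp-b2} is identically zero, so $b_2\equiv 0$ directly. The main obstacle is to rule out the complementary sub-case in which at least one $\tilde y_i$ is linear. There $\gcd(\tilde y_1,\tilde y_2)=1$ forces $\dot{\tilde y}_1\tilde y_2-\tilde y_1\dot{\tilde y}_2$ to be a nonzero constant, so the numerator of \eqref{exp-b2} is nonzero and is divisible by $(1+x^2)^{\beta_2+\eta+1-\alpha}$, whose exponent is strictly positive because $\alpha\le\eta$ and $\beta_2\ge 0$. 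On the other hand, the denominator $2g\,\tilde y_1\tilde y_2(\tilde y_1-\tilde y_2)$ is coprime with $1+x^2$: $g$ and the $\tilde y_i$ are so by construction, and $\tilde y_1-\tilde y_2$ has degree at most $1$ and therefore cannot contain the irreducible degree-$2$ factor $1+x^2$. Hence a positive power of $1+x^2$ must survive in $b_2$, contradicting $\gcd(b_2,1+x^2)=1$ from Lemma~\ref{l000}. This sub-case is therefore empty, and combining both sub-cases gives $b_2\equiv 0$. The consequence at the end of (ii) is then immediate, since $b_2\equiv 0$ is equivalent to $B_2\equiv 0$.
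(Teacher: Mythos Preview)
Your proposal is correct and follows essentially the same route as the paper. In part~(i) the paper performs the elimination after passing to the $x$-variable while you eliminate $B_1$ in the $\theta$-variable first, but this is the same computation; in part~(ii) the paper first invokes Lemmas~\ref{l1-1} and~\ref{l4-1} to pin down $\beta_2=0$ and $\alpha=\eta$ (so the exponent of $1+x^2$ equals exactly~$1$), whereas you bypass this by observing directly that $\beta_2+\eta+1-\alpha\ge 1$, after which the $(1+x^2)$-divisibility contradiction is identical.
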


\begin{proof}
$(i)$ Recall that $Y_1(\theta)=y_1(x)/(1+x^2)^{\eta_1}$ and
$Y_2(\theta)=y_2(x)/(1+x^2)^{\eta_1}$ are the solutions of equation
\eqref{e1*-1}. Then, we have for $i=1, 2$,
\[
\frac{a(x)}{2(1+x^2)^{\alpha}}\left(\dot y_i(x)(1+x^2)-2\eta_1
xy_i(x)\right)=\frac{
b_1(x)y_i(x)}{(1+x^2)^{\beta_1}}+\frac{b_2(x)y_i^2(x)}{(1+x^2)^{\beta_2+\eta_1}}.
\]
Solving these two equations gives
\[
b_2(x)=(1+x^2)^{\beta_2+\eta_1+1-\alpha}\frac{ \left(\dot y_1(x)
y_2(x)-y_1(x) \dot y_2(x)\right)a(x)}{
2\, y_1(x)y_2(x)\left(y_1(x)-y_2(x)\right)}.
\]
By using that $y_i(x)=g(x)\, \tilde y_i(x), i=1, 2,$ the desired
expression for $b_2(x)$ follows.

\medskip

\noindent $(ii)$ Since $\deg Y_j=\eta_j=\eta$ it follows from
Lemma~\ref{l1-1} that $\deg B_2=\beta_2=0.$ Moreover since
$y_j=g\tilde y_j,$ $\deg g\ge 2\eta-1$ and $\deg y_j\le 2\eta$ it
holds that $\deg \tilde y_j\le 1.$ Furthermore, by Lemma~\ref{l4-1}
we get that $\deg a \ge 2\eta-1$ and as a consequence $\deg A
=\alpha=\eta.$ Putting all together, by using item $(i)$ and that
$B_2(\theta)$ must be a constant, $B_2(\theta)\equiv p\in\R,$ we get
that
\begin{equation}\label{const}
2\, p\, g(x)\, \tilde y_1(x)\tilde y_2(x)(\tilde y_1(x)-\tilde
y_2(x))=(1+x^2)\big(\dot{\tilde y}_1(x) \tilde y_2(x)-\tilde
y_1(x)\dot{\tilde y}_2(x) \big)a(x).
\end{equation}
Recall that the polynomial $g$ has no the factor $1+x^2$.
Moreover $\tilde y_1,$ $\tilde y_2,$ and $\tilde y_1-\tilde y_2$
have at most degree 1, so they neither have this quadratic factor.
Finally, notice that $\dot{\tilde y}_1(x) \tilde y_2(x)-\tilde
y_1(x)\dot{\tilde y}_2(x)$ is a real constant, which is not zero
unless both $\tilde y_j$ are of degree zero, because recall that
$\gcd(\tilde y_1, \tilde y_2)=1.$

Hence the above equation \eqref{const} has only the solution $p=0$ when both
$\tilde y_j$ are of degree zero and otherwise it is not possible.
\end{proof}

\begin{lemma}\label{lemma-final}
Consider an equation of the form \eqref{e1*-1} with $\eta\ge2,$ $B_2(\theta)\not\equiv0,$ having
at least four different trigonometric polynomial solutions, and all the notations introduced in
Lemma~\ref{l3-1}. Assume that it has two trigonometric polynomial solutions $Y_1(\theta)$ and
$Y_2(\theta)$, both of degree $\eta$. Moreover suppose that there exists $c_0\not\in\{0, 1\}$ such
that $r_{c_0}(x)= c_0 \tilde y_1(x)+(1-c_0)\tilde y_2(x)= q (1+x^2)^\sigma$ for some $0\ne
q\in\R,$ $0\le \sigma\in\N$ and that $\deg g\in\{2\eta-2, 2\eta-3\}$. Then the number of
trigonometric polynomial solutions of \eqref{e1*-1} is at most $\eta+2.$
\end{lemma}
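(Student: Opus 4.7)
The plan is to combine Lemma~\ref{lemma-b2}$(i)$ with the parameterization forced by $r_{c_0}$ being a constant, and then carry out a $(1+x^2)$-adic divisibility analysis that severely restricts the integers $d:=\deg \tilde y_1 = \deg \tilde y_2$ and $\deg g$.

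Since $\deg Y_1 = \deg Y_2 = \eta$, Lemma~\ref{l1-1} forces $\deg B_2 = 0$, hence $B_2 \equiv p \in \R\setminus\{0\}$ and $\beta_2 = 0$. Because every trigonometric polynomial solution then has degree at most $\eta$, the exponent $\sigma$ in the hypothesis must be $0$, so $r_{c_0}(x) \equiv q$ is a nonzero constant. The relation $c_0 \tilde y_1 + (1-c_0)\tilde y_2 = q$ yields the parameterization $\tilde y_1 = q + (c_0-1)v$, $\tilde y_2 = q + c_0 v$ with $v := \tilde y_2 - \tilde y_1$ of degree $d$, and a direct computation gives the clean Wronskian identity $\dot{\tilde y}_1 \tilde y_2 - \tilde y_1 \dot{\tilde y}_2 = -q\,\dot v$. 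Substituting into Lemma~\ref{lemma-b2}$(i)$ and clearing denominators produces the polynomial identity
\[
2p\,g\,\tilde y_1 \tilde y_2\,v \;=\; (1+x^2)^{\eta+1-\alpha}\,q\,\dot v\,a(x).
\]

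The key step is to compare $(1+x^2)$-adic valuations on both sides. Since $g,\tilde y_1,\tilde y_2,a$ are all coprime to $1+x^2$ by construction and $\eta+1-\alpha \geq 1$, I would first rule out $v$ having no $(1+x^2)$ factor (otherwise the LHS carries no such factor while the RHS carries a positive power). So $v = w(1+x^2)^\tau$ with $\tau \geq 1$ and $\gcd(w,1+x^2)=1$; setting $\dot v = (1+x^2)^{\tau-1}u$ with $u := \dot w(1+x^2) + 2\tau x w$, the evaluation $u(\pm i) = \pm 2\tau i\,w(\pm i) \neq 0$ shows $u$ is coprime to $1+x^2$, and equating valuations on the two sides yields $\alpha = \eta$. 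Because $d = \deg v \geq 2\tau \geq 2$ and $d \leq 2\eta - \deg g \leq 3$, we are reduced to $d \in \{2,3\}$ with $\tau = 1$ in both cases. Cancelling $(1+x^2)^\tau$ and solving for $a(x)$, a direct degree count gives $\deg a = \deg g + 3$ when $d=2$ ($w$ is constant, $\deg u = 1$) and $\deg a = \deg g + 5$ when $d = 3$ ($\deg w = 1$, $\deg u = 2$); the constraint $\deg a \leq 2\alpha = 2\eta$ then excludes $\deg g = 2\eta-2$ in the first case and rules out the second case entirely (since $\deg g \geq 2\eta-3$).

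The only surviving scenario is $d = 2$ with $\deg g = 2\eta - 3$. In this case every non-constant $r_{c_j}$ dividing $g$ has degree $\rho = 2$, so Lemma~\ref{l5-1}$(ii)$ yields at most $\lfloor \deg g/\rho\rfloor + 4 = \lfloor(2\eta-3)/2\rfloor + 4 = \eta+2$ trigonometric polynomial solutions, as claimed. The main obstacle is the $(1+x^2)$-adic step: one must extract from the single identity both the divisibility $(1+x^2)\mid v$ and the sharp equality $\alpha = \eta$, which together with the bound $d \leq 3$ pin down the pair $(d,\tau)$. Once these are in hand, the degree bookkeeping that eliminates all but one sub-case is elementary, and the final count follows from one application of Lemma~\ref{l5-1}$(ii)$.
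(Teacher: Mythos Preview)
Your overall strategy is sound and close to the paper's, but the step claiming $\sigma=0$ is not justified as written. You assert that ``every trigonometric polynomial solution has degree at most $\eta$'' forces $\sigma=0$, yet the hypothesis only says that $r_{c_0}(x)=q(1+x^2)^\sigma$ for \emph{some} $c_0$; nothing guarantees that $Y(\theta;c_0)$ is itself a trigonometric polynomial solution, so the degree bound on solutions gives no information about $\sigma$. This matters because your parameterization $\tilde y_i = q + (\text{const})\,v$ and the clean Wronskian identity $-q\dot v$ both use $\sigma=0$, so the rest of your argument rests on an unproved claim. The fix is easy and is essentially what the paper does: apply the identity from Lemma~\ref{lemma-b2}$(i)$ in its raw form $2p\,g\,\tilde y_1\tilde y_2(\tilde y_1-\tilde y_2)=(1+x^2)^{\eta+1-\alpha}(\dot{\tilde y}_1\tilde y_2-\tilde y_1\dot{\tilde y}_2)\,a(x)$ \emph{before} assuming anything about $\sigma$; since $g,\tilde y_1,\tilde y_2,a$ are coprime to $1+x^2$ and $\eta+1-\alpha\ge 1$, this already forces $(1+x^2)\mid(\tilde y_1-\tilde y_2)=-v$. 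Then $\tilde y_2=q(1+x^2)^\sigma+c_0 v$ together with $\gcd(\tilde y_2,1+x^2)=1$ gives $\sigma=0$. After this reordering your argument goes through.

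Beyond this gap, your route diverges from the paper's in the endgame. The paper, once it has $\tilde y_1-\tilde y_2=k(x)(1+x^2)$ with $\deg k\le 1$, writes $r_c=q+(c-c_0)k(x)(1+x^2)$, checks directly that $Y(\theta;c_0)$ is \emph{not} a trigonometric polynomial (the numerator has degree $\ge 2\eta+1$), and then bounds by $[\deg g/2]+3\le \eta+2$, handling both values of $\deg g$ at once. You instead push the $(1+x^2)$-adic count further to pin down $\alpha=\eta$ and compute $\deg a$ exactly, which lets you \emph{exclude} the subcases $d=3$ and $(d,\deg g)=(2,2\eta-2)$ via $\deg a\le 2\eta$, leaving only $(d,\deg g)=(2,2\eta-3)$, where Lemma~\ref{l5-1}$(ii)$ with $\rho=2$ gives $[(2\eta-3)/2]+4=\eta+2$. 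Your degree bookkeeping is correct and in fact sharper (it shows the case $\deg g=2\eta-2$ is vacuous under the lemma's hypotheses), but it is more work than needed; the paper's argument is shorter because it never needs $\alpha=\eta$ or the value of $\deg a$.
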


\begin{proof} Since $\deg Y_j=\eta$, by Lemma~\ref{l1-1}, $\deg B_2=0$
and $b_2(x)\equiv p\ne0$. In this case, using the same ideas as
in the proof of item $(i)$ of Lemma~\ref{lemma-b2}, we get the
expression
\[
2\, p\, g(x)\, \tilde y_1(x)\tilde y_2(x)(\tilde y_1(x)-\tilde
y_2(x))=(1+x^2)^{\eta-\alpha+1}\big(\dot{\tilde y}_1(x) \tilde
y_2(x)-\tilde y_1(x)\dot{\tilde y}_2(x) \big)a(x).
\]
that is essentially the same that in \eqref{const}. Since none of
the functions $\tilde y_1,$ $\tilde y_2$, $g$ has the factor
$1+x^2$, by imposing that the above equality holds we get that there
exists a nonzero polynomial $k(x),$ of degree at most one, such that
$\tilde y_1(x)-\tilde y_2(x)= k(x) (1+x^2).$ This equality together
with the assumption $r_{c_0}(x)= c_0 \tilde y_1(x)+(1-c_0)\tilde
y_2(x)= q (1+x^2)^\sigma$ implies that
\[
\tilde y_2(x)= q(1+x^2)^\sigma-c_0 k(x) (1+x^2).
\]
Since $\gcd(\tilde y_2, 1+x^2)=1$ the above equality implies that $\sigma=0$ and as a consequence
\[
\tilde y_2(x)= q-c_0 k(x) (1+x^2)\quad\text{and}\quad \tilde
y_1(x)=q+(1-c_0) k(x) (1+x^2).
\]
Hence, by Lemma~\ref{l3-1} the
general solution of the Riccati equation is
\[
Y(\theta;c)=\frac{g(x) \tilde y_1(x) \tilde y_2(x)}{(1+x^2)^\eta
r_c(x)}=\frac{g(x) \tilde y_1(x) \tilde y_2(x)}{(1+x^2)^\eta
\big(q+(c-c_0) k(x) (1+x^2)\big)}.
\]
Notice that the solution corresponding to $c=c_0$ is never a polynomial because the degree of the
numerator is at least $2\eta-3+4>2\eta.$ Hence, following the notations of Lemma~\ref{l5-1}, for
$c\ne c_0,$ $\deg r_c=\rho\ge2,$ and then the maximum number of polynomial solutions is $[
\deg(g)/2]+3\le \eta+2$, where the $3$ corresponds to the solutions $Y=0,$ $Y_1(\theta),$ and
$Y_2(\theta)$.
\end{proof}

\begin{lemma}\label{lemma-final-2}
Consider an equation of the form \eqref{e1*-1} with $B_2(\theta)\not\equiv0,$ having at least four
different trigonometric polynomial solutions, and all the notations introduced in
Lemma~\ref{l3-1}. Assume that it has two solutions $Y_1(\theta)$ and $Y_2(\theta)$ of degree
$\eta\ge1$ and such that $\deg g= 2\eta-2$. Then  $\eta\ge2$ and the number of trigonometric
polynomial solutions of \eqref{e1*-1} is at most $\eta+2.$
\end{lemma}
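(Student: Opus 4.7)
The plan is to combine Lemma~\ref{lemma-b2}(i) with a careful $(1+x^2)$-multiplicity count to derive the structural identity $\tilde y_1-\tilde y_2=\kappa(1+x^2)$, and then dispatch the count of solutions either to Lemma~\ref{lemma-final} or to Lemma~\ref{l5-1}(i).

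Since $\deg Y_j=\eta$ for $j=1,2$, Lemma~\ref{l1-1} forces $\deg B_2=0$, so $b_2(x)\equiv p\ne 0$. I would apply Lemma~\ref{lemma-b2}(i) with $\eta_1=\eta$ to obtain
\[
2p\,g(x)\,\tilde y_1(x)\,\tilde y_2(x)\,(\tilde y_1(x)-\tilde y_2(x))=(1+x^2)^{\eta+1-\alpha}\bigl(\dot{\tilde y}_1(x)\tilde y_2(x)-\tilde y_1(x)\dot{\tilde y}_2(x)\bigr)a(x).
\]
Because $g$, $\tilde y_1$, $\tilde y_2$ and $a$ are all coprime with $1+x^2$, comparing $(1+x^2)$-multiplicities on both sides forces $(1+x^2)^{\eta+1-\alpha}\mid \tilde y_1-\tilde y_2$. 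Together with $\deg\tilde y_j\le 2$ (from $\deg y_j\le 2\eta$ and $\deg g=2\eta-2$) and $\alpha\le\eta$, this yields $\alpha=\eta$ and $\tilde y_1(x)-\tilde y_2(x)=\kappa(1+x^2)$ for some $\kappa\in\R\setminus\{0\}$. Evaluating at $x=i$ gives $\tilde y_1(i)=\tilde y_2(i)\ne 0$, whence $r_c(i)=\tilde y_1(i)\ne 0$ for every $c\in\R$; hence $1+x^2$ never divides $r_c$, and by Lemma~\ref{l-new} every nontrivial trigonometric polynomial solution has degree exactly~$\eta$.

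To prove $\eta\ge 2$ I would argue by contradiction. If $\eta=1$ then $g$ is a nonzero constant, so any additional solution $Y(\theta;c)$ with $c\notin\{0,1\}$ must (by Lemma~\ref{l3-1}(ii)) have $r_c$ a nonzero constant and (by Lemma~\ref{l000}) the numerator $\tilde y_1(x)\tilde y_2(x)/r_c$ of degree at most $2\eta_1=2$. Since $\tilde y_1-\tilde y_2=\kappa(1+x^2)$ forces $\max(\deg\tilde y_1,\deg\tilde y_2)=2$, the numerator bound then forces the other $\tilde y_j$ to be a nonzero constant; a short computation of $r_c$ in this configuration shows that the only $c$ annihilating the leading coefficient of $r_c$ is $c=0$ or $c=1$, which merely reproduces $Y_1$ or $Y_2$. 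Thus no fourth trigonometric polynomial solution can exist, contradicting the hypothesis, and so $\eta\ge 2$.

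For $\eta\ge 2$ I would then split into two cases. If there exists $c_0\notin\{0,1\}$ with $r_{c_0}(x)=q(1+x^2)^\sigma$, $q\ne 0$, $\sigma\ge 0$, then by the previous paragraph $\sigma=0$, and all hypotheses of Lemma~\ref{lemma-final} are satisfied, giving the bound $\eta+2$ directly. Otherwise, I would apply Lemma~\ref{l5-1}(i): since $\tilde y_1-\tilde y_2$ has degree $2$, $\deg r_c=2$ for every $c$ except at most one value at which the leading $x^2$-coefficient of $r_c$ cancels, and under the current no-$c_0$ assumption this exceptional $r_c$ is not a nonzero constant. Feeding this into the decomposition $g=\check g\,\prod_j r_{c_j}$ of Lemma~\ref{l-new}, I would check that $k\le\eta-1$, yielding the desired bound $3+k\le\eta+2$; equivalently, this amounts to applying Lemma~\ref{l5-1}(i) with $\rho=2$. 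The main obstacle I foresee is the bookkeeping around that single exceptional value of $c$: it must be shown not to contribute a genuine trigonometric polynomial solution in the $\eta=1$ argument (via the interplay of $r_c\mid g$ with the numerator-degree constraint of Lemma~\ref{l000}), and in the $\eta\ge 2$ no-$c_0$ case its possible low degree must be prevented from spoiling the common-degree $\rho=2$ required by Lemma~\ref{l5-1}(i).
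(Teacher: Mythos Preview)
Your proposal is correct and takes a genuinely different route from the paper's own proof.

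Both arguments begin identically: from Lemma~\ref{lemma-b2}(i) one extracts $\alpha=\eta$ and $\tilde y_1-\tilde y_2=\kappa(1+x^2)$. From here the approaches diverge. The paper pushes the algebra further: it parametrises $\tilde y_1,\tilde y_2$ explicitly through $\tilde a(x)$ and an auxiliary linear polynomial $s(x)$, writes $r_c$ in closed form, shows $\gcd(r_c,1+x^2)=1$ from that formula, and then does a case split on $\deg r_{c_0}\in\{0,1\}$ for the unique exceptional $c_0$. The case $\deg r_{c_0}=0$ is eliminated by a direct computation of a certain polynomial $W(x)$, showing that it forces $c_0\in\{0,1\}$. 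You bypass all of this: the coprimality $\gcd(r_c,1+x^2)=1$ comes from the one-line evaluation $r_c(i)=\tilde y_1(i)\ne 0$; the constant-$r_{c_0}$ case is dispatched wholesale to Lemma~\ref{lemma-final}; and the remaining case is handled by the degree-count $k\le\eta-1$ coming from the factorisation \eqref{factor-g}.

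What each approach buys: yours is shorter, more modular, and entirely avoids the $W(x)$ computation, at the price of invoking Lemma~\ref{lemma-final} (which in turn does its own analysis). The paper's approach is more self-contained and actually proves a sharper fact along the way, namely that for $c_0\notin\{0,1\}$ the polynomial $r_{c_0}$ can never be a nonzero constant under these hypotheses, which you only need as a hypothetical branch. Your flagged concern about the single exceptional $c$ with $\deg r_c=1$ is harmless: in the direct count it contributes at least one to $\deg g$, so $2(k-1)+1\le 2\eta-2$ still gives $k\le\eta-1$; and in your $\eta=1$ paragraph the numerator-degree bound from Lemma~\ref{l000} rules out that value contributing a fourth solution exactly as you sketch.
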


\begin{proof}
Set
\[
y_1(x)=g(x)\tilde y_1(x), \quad y_2(x)=g(x)\tilde y_2(x), \quad a(x)=g(x)\tilde a(x)
\]
where $\gcd (\tilde y_1, \tilde y_2)=1,$ $\deg \tilde y_1, \, \deg \tilde y_2, \, \deg \tilde a
\le 2,$ and none of these three polynomials have the factor $1+x^2.$ Set $\tilde a (x)= a_2 x^2+
a_1x+a_0.$

Notice that it is not possible that $\deg \tilde y_1=\deg \tilde
y_2=0$ because by \eqref{exp-b2} in Lemma~\ref{lemma-b2} this fact
implies that $B_2(\theta)\equiv0$, a contradiction.

Since $\deg (Y_1)=\deg(Y_2)=\eta$ we know by Lemma~\ref{l1-1} that $B_2(\theta)\equiv
p\in\R\setminus\{0\}$. Then, by Lemma~\ref{lemma-b2},
\[
2\, p\, g(x)\, \tilde y_1(x)\tilde y_2(x)(\tilde y_1(x)-\tilde
y_2(x))=(1+x^2)^{\eta-\alpha+1}\big(\dot{\tilde y}_1(x) \tilde
y_2(x)-\tilde y_1(x)\dot{\tilde y}_2(x) \big)a(x).
\]
Moreover, since $g$ and $\tilde y_i$ have no the factor $1+x^2$, we
get that $\alpha=\eta$ and
\begin{equation}\label{rela}
\tilde y_1(x)-\tilde y_2(x)= q\, (1+x^2)
\end{equation}
for some $q\in\R\setminus\{0\}$ and the above equality simplifies to
\[
2\, p\, q\, \tilde y_1(x)\tilde y_2(x)=\big(\dot{\tilde y}_1(x) \tilde
y_2(x)-\tilde y_1(x)\dot{\tilde y}_2(x) \big)\tilde a(x).
\]
Using \eqref{rela} we get
\[
2\, p\, \big(\tilde y_2(x)+q(1+x^2)\big)\tilde y_2(x)=\big(2\, x\,
\tilde y_2(x)-(1+x^2)\dot{\tilde y}_2(x) \big)\tilde a(x),
\]
or equivalently,
\begin{equation}\label{cond-final}
\big(2\, p\, \tilde y_2(x) -2\, x\, \tilde a(x)\big)\tilde
y_2(x)=-(1+x^2)\big(\dot{\tilde y}_2(x) \tilde a(x)+2\, p\, q \tilde
y_2(x) \big).
\end{equation}
Since $\gcd(\tilde y_2, 1+x^2)=1,$ the above equality can only
happen if
\[
2\, p\, \tilde y_2(x) -2\, x\, \tilde a(x)=s(x)\, (1+x^2), \quad
\text{where} \quad s(x)= s_1 x+ s_0, \, s_0, s_1\in\R.
\]
From it we get that
\[
\tilde y_2(x)=\frac{s(x)}{2p}(1+x^2)+\frac x p\, \tilde a(x), \quad \tilde
y_1(x)=\Big(\frac{s(x)}{2p}+q\Big)(1+x^2)+\frac x p\, \tilde a(x),
\]
and moreover $s_1=-2 a_2$ because $\deg \tilde y_i\le 2, i=1, 2.$
Then
\[
r_c(x)= \Big(\frac{s(x)}{2p}+c\, q\Big) (1+x^2) +\frac x p\, \tilde a(x), \qquad \deg(r_c)\le2,
\]
and by Lemma~\ref{l3-1},
\[
Y(\theta;c)=\frac{g(x) \tilde y_1(x) \tilde y_2(x)}{(1+x^2)^\eta
r_c(x)}.
\]
Notice that for all $c\ne c_0:=-(2a_1+s_0)/(2pq),$ $\deg r_c=2$ and $\deg r_{c_0}<2$. Moreover
$\gcd(r_c(x),1+x^2)=1,$ because otherwise, $1+x^2$ would be a factor of $\tilde a$. Since the
degree of $g$ is $2\eta-2$, by item $(ii)$ of Lemma~\ref{l5-1} the maximum number of trigonometric
polynomial solutions is $(2\eta-2)/2+4=\eta+3.$

To reduce this upper bound by 1 we have to continue our study. First notice that if the degree of
$r_{c_0}$ is 1 then $r_{c_0}$ should also divide $g(x)$ and then
\[
Y(\theta;c)=\frac{\check g(x) r_{c_0}(x) \tilde y_1(x) \tilde y_2(x)}{(1+x^2)^\eta r_c(x)},
\]
with $\deg (\check g)=2\eta-3.$ As a consequence, $\eta\ge 2$. By using the same arguments that in the proof of Lemma~\ref{l5-1}
we get that the maximum number of trigonometric polynomials solutions in this case is at most
$[\deg(\check g)/2]+4= [(2\eta-3)/2]+4=\eta+2,$ where the 4 counts the solution $0$ and the ones
corresponding to $c\in\{0,1,c_0\}.$ Then the result is proved when $\deg(r_{c_0})=1.$

Finally, we will prove that there is no $c_0\not\in\{0, 1\}$ such that $\deg r_{c_0}=0$. Imposing
that $r_{c_0}$ is constant we get that
\[
s_0=-2(c_0\, p\, q+a_1), \quad a_2=a_0,\quad\text{and}\quad
r_{c_0}(x)=-a_1/p.
\]
Then $\tilde a(x)=a_0\, x^2+a_1\, x+a_0$ and $ s(x)=
-2\, a_0\, x-2(c_0\, p\, q+a_1).$ Substituting the above expressions
into the function
\[
W(x):=\big(2\, p\, \tilde y_2(x) -2\, x\, \tilde a(x)\big)\tilde
y_2(x)+(1+x^2)\big(\dot{\tilde y}_2(x) \tilde a(x)+2\, p\, q \tilde
y_2(x) \big),
\]
obtained from \eqref{cond-final}, we get that
\[
W(x)= \frac 2 p
(1+x^2)\Big(c_0\, (c_0-1)\, {p}^{2}{q}^{2}{x}^{2}+a_0\, a_1\, x+
 \left((c_0-1)\, p\, q +a_1 \right) \left( c_0\, p\, q+a_1
 \right)\Big)
\]
and we know that it must vanish identically. The only solutions compatible with our hypotheses are
either $a_1=p\, q$ and $a_0=c_0=0$ or $a_1=-p\, q$, $a_0=0$ and $c_0=1$. In both situations the
value $c_0$ is either $0$ or $1$ and hence there are no solutions different from $Y_1$ or $Y_2$
such that $r_{c_0}$ is constant.

Therefore, we have shown that in all the situations the Riccati equation has at most $\eta+2$
trigonometric polynomial solutions. Hence $\eta\ge 2$ and the lemma follows.
\end{proof}

\smallskip

\begin{proof}[Proof of the upper bound given in Theorem~\ref{t2} when $\eta\ge2$]
Recall that we want to prove that trigonometric Riccati equations of degree $\eta\ge 2$ have at
most $2\eta$ trigonometric polynomial solutions. By using Lemma~\ref{l2-1} we can restrict our
attention to equation~\eqref{e1*-1}. Moreover, we can assume that this equation has at least four
trigonometric polynomial solutions ($\eta\ge2$), because if not we are done. Therefore we are
always under the assumptions of Lemma~\ref{l5-1} and, apart of the solution $Y_0(\theta)\equiv 0$,
we can suppose that the equation has two trigonometric polynomial solutions $Y_1(\theta)$ and
$Y_2(\theta)$, both of degree $\eta_1\le \eta.$ Then the corresponding $y_1$ and $y_2$ given in
expressions~\eqref{e226} have degree at most $2\eta_1.$ By the definition of $g$ in
Lemma~\ref{l3-1} we have $\deg g\le 2\eta_1$ and so $N(g)\le 2\eta_1\le 2\eta$.

In item $(iii)$ of Lemma~\ref{l5-1} we have also proved that when the situation described there
happens (that is the existence of one $c,$ $c=\breve c,$ such that $r_{\breve c}(x)=p (1+x^2)^\sigma,
0<\sigma\in\N$ and one $c$, $c=\hat c,$ such that $r_{\hat c}(x)=q$ for some nonzero constants $p$
and $q$) then equation~\eqref{e1*-1} has at most $\eta+2$ trigonometric polynomial solutions.
Since for $\eta\ge 2$ it holds that $\eta+2\le 2\eta$ and we do not need to consider this
situation anymore. Hence, by items $(i)$ and $(ii)$ of Lemma~\ref{l5-1}, when $N(g)\le 2\eta-4$ we
have proved the upper bound given in the statement.

Moreover, since $\deg g \ge N(g)$, by using Lemma~\ref{lemma-b2} we
also know that the upper bound holds when $N(g)\in\{2\eta,
2\eta-1\},$ because either they correspond to $B_2(\theta)\equiv0$
or with a Riccati equation with at most 4 trigonometric polynomial
solutions.

We will prove the result for the remaining situations by a case by case study,
according whether $N(g)=2\eta-2$ or $N(g)=2\eta-3.$

Observe also that we never have to consider the situations where
$\deg \tilde y_1=\deg \tilde y_2=0$ because by \eqref{exp-b2} in
Lemma~\ref{lemma-b2} this fact implies that $B_2(\theta)\equiv0$, a
contradiction.

\smallskip

\noindent $\bullet$ [\emph{Case $N(g)=2\eta-2$}]: We have that
$\eta_1=\eta.$ Let $g_1(x), \ldots, g_{2\eta-2}(x)$ be the $2\eta-2$
different linear factors of $g(x)$. Again by Lemma~\ref{lemma-b2} we
do not need to consider the cases $\deg g\ge2\eta-1$. Then
$g(x)=g_1(x)\cdots g_{2\eta-2}(x)$. We are precisely under the
situation of Lemma~\ref{lemma-final-2} and the upper bound is at
most $\eta+2\le 2\eta,$ as we wanted to see.

\smallskip

\noindent $\bullet$ [\emph{Case $N(g)=2\eta-3$}]: By item $(i)$ of Lemma~\ref{l5-1}, if for any
$c\in\R$, $c\tilde y_1(x)+(1-c)\tilde y_2(x)$ is not a constant multiple of $(1+x^2)^\sigma$ with
$\sigma$ is a nonnegative integer, then equation~\eqref{e1*-1} has at most $2\eta$ real
trigonometric polynomial solutions. Therefore, we can assume that there exists a
$c_0\in\R\setminus\{0, 1\}$ such that $c_0\tilde y_1(x)+(1-c_0)\tilde y_2(x)=q (1+x^2)^\sigma$
with $q \ne 0$ a nonzero constant and $\sigma$ a nonnegative integer and that this $c_0$ is
unique, see Lemma~\ref{l-new} and item $(iii)$ of Lemma~\ref{l5-1}.

Notice that $\deg g\ge N(g)=2\eta-3$ and recall that we only need
to consider the cases $\deg g\le2\eta-2$. Then $\eta_1\in\{\eta, \eta-1\}$.

\smallskip

\noindent $\blacktriangleright$ [\emph{Subcase $\deg(g)=2\eta-2$ and $\eta_1=\eta-1$}]:
It never holds because these hypotheses imply that $\deg \tilde y_1=\deg \tilde y_2=0.$

\smallskip

\noindent $\blacktriangleright$ [\emph{Subcase $\deg(g)=2\eta-2$ and
$\eta_1=\eta$}]: We can assume that $\max(\deg \tilde y_1, \deg
\tilde y_2)>0$. Then, by Lemma~\ref{lemma-final} the maximum number
of trigonometric polynomial solution is $\eta+2\le 2\eta,$ for
$\eta\ge2,$ as we wanted to prove.

\smallskip

\noindent $\blacktriangleright$ [\emph{Subcase $\deg(g)=2\eta-3$ and
$\eta_1=\eta-1$}]: We know that $\max(\deg \tilde y_1, \deg \tilde
y_2)=1$. In fact, the existence of $c_0\not\in\{0, 1\}$ such that
$r_{c_0}(x) =q (1+x^2)^\sigma, q\ne0$ implies that $\sigma=0$ and
$\deg \tilde y_1=\deg \tilde y_2=1$. Now, by item $(ii)$ of
Lemma~\ref{l3-1},
\[
Y(\theta;c)=\frac{g(x) \, \tilde y_1(x) \, \tilde y_2(x)}{(1+x^2)^{\eta-1} r_c(x)}.
\]
In this situation, for this special value $c=c_0,$ the corresponding
$Y(\theta;c_0)$ is not a trigonometric polynomial because $\deg(g \,
\tilde y_1 \, \tilde y_2)=2\eta-3+2=2\eta-1>2(\eta-1).$ Then by item
$(i)$ of Lemma~\ref{l5-1} the number of trigonometric polynomial
solutions in this case is at most $N(g)+3=2\eta.$

\smallskip

\noindent $\blacktriangleright$ [\emph{Subcase $\deg(g)=2\eta-3$ and
$\eta_1=\eta$}]: Once more, we know that $\max(\deg \tilde y_1, \deg
\tilde y_2)=1$. This situation is again covered by
Lemma~\ref{lemma-final} and the maximum number of trigonometric
polynomial solutions is $\eta+2\le 2\eta,$ for $\eta\ge2.$
\end{proof}

\begin{proof}[Examples for proving Theorem~\ref{t2} with $\eta\ge 2$]
The above proofs show that equation~\eqref{e1*-1} can have $2\eta$
trigonometric polynomial solutions only in the case $N(g)=\deg
g=2\eta-3$.

\smallskip

Next we provide examples showing that there exist equations~\eqref{e1*-1} which have exactly $k+3$ trigonometric polynomial solutions with $k\in\{1, \ldots, 2\eta-3\}$. Taking $g(x)=\prod\limits_{i=1}\limits^{k}(x-c_i)$ with $c_i\in\R$ and $c_i\ne c_j$ for $1\le i\ne j\le k$. Set
\begin{equation}\label{epres}
A(\theta)=\frac{2(x-d_1)(x-d_2)g(x)}{(1+x^2)^\eta}, \quad
B_1(\theta)=\frac{g(x)h_1(x)+\dot{g}(x)h_2(x)}{(1+x^2)^\eta}, \quad
B_2(\theta)=-1,
\end{equation}
where $d_1$ and $d_2$ are two different real constants and are
different from $c_i$ for $i=1, \ldots, k$, and
$h_1(x)=(1+x^2)(2x-d_1-d_2)-2(\eta-1)x(x-d_1)(x-d_2)$ and
$h_2(x)=(1+x^2)(x-d_1)(x-d_2)$. We can check that equation~\eqref{e1*-1} with the prescribed $A, \, B_1, \, B_2$ given in \eqref{epres} has the trigonometric polynomial solutions
\[
Y_1(\theta)=\frac{g(x)(x-d_1)}{(1+x^2)^{\eta-1}}, \qquad
Y_2(\theta)=\frac{g(x)(x-d_2)}{(1+x^2)^{\eta-1}}.
\]
Then, by Lemma~\ref{l3-1},
\[
Y(\theta;c)=\frac{y_1(x)y_2(x)}{cy_1(x)+(1-c)y_2(x)}=\frac{\prod\limits_{i=1}\limits^{k}
(x-c_i)(x-d_1)(x-d_2)}{(1+x^2)^{\eta-1}(x-(cd_1+(1-c)d_2))}.
\]
Taking
\[
c=0, \quad 1, \quad \frac{c_i-d_2}{d_1-d_2}, \quad i=1, \ldots, k,
\]
we get $k+3$ trigonometric polynomial solutions of equation~\eqref{e1*-1} counting also the
trivial solution $Y_0=0$. Clearly for any other $c$, the solution $Y(\theta;c)$ cannot be a
trigonometric polynomial.

Of course, in the above construction if we take $g$ a nonzero
constant, then the equation has exactly three trigonometric
polynomial solutions.
\end{proof}

\begin{proof}[Proof of Theorem~\ref{t2} when $\eta =1$]
The simple Riccati equation
\[
\sin\theta\, Y'= 2\cos\theta \, Y- Y^2,
\]
has three trigonometric polynomial solutions $Y=0,$ $Y=1+\cos\theta,$ and $Y=-1+\cos\theta.$ Hence we know that when $\eta=1$ the number of
trigonometric polynomial solutions is at least~3.

Let us prove that~3 is also the upper bound. Otherwise, assume that there are Riccati equations
with $\eta=1$ and four trigonometric polynomial solutions to arrive to a contradiction. Therefore,
we are under the hypotheses of item $(ii)$ of Lemma~\ref{l3-1} and we can suppose that the
equation has two nonzero solutions of the same degree $\eta_1\in\{0, 1\}.$ If $\eta_1=0$, then
$y_1$ and $y_2$ are both constants. By Lemma~\ref{lemma-b2}, this forces that $B_2(\theta)\equiv
0$, a contradiction.

Hence we can assume that $\eta_1=1.$ By Lemma~\ref{l1-1},
$B_2(\theta)\equiv p\ne0$. By using once more the same ideas that in
the proof of item $(i)$ of Lemma~\ref{lemma-b2}, we get that
\[
2\, p\, g(x)\, \tilde y_1(x)\tilde y_2(x)(\tilde y_1(x)-\tilde
y_2(x))=(1+x^2)^{2-\alpha}\big(\dot{\tilde y}_1(x) \tilde
y_2(x)-\tilde y_1(x)\dot{\tilde y}_2(x) \big)a(x).
\]
Then $\alpha=1$, and $\tilde y_1-\tilde y_2$ is a constant multiple of $1+x^2$. In particular, one
of $\tilde y_1$ and $\tilde y_2$ has degree $2$ and so $\deg g=0$. Therefore, we are under the
hypotheses of Lemma~\ref{lemma-final-2} and we get that $\eta\ge2,$ again a contradiction. Then
the result follows.
\end{proof}

\section*{Acknowledgments}
The first and second authors are partially supported by the MINECO
MTM2013-40998-P and AGAUR 2014SGR568 grants. The third author is
partially supported by the NNSF of China 11271252 and the Innovation
program of Shanghai Municipal Education Commission of China 15ZZ02.
All authors are partially supported by the European Community
FP7-PEOPLE-2012-IRSES-316338 grant.


\begin{thebibliography}{99}

\bibitem{B} {\sc D. Bernoulli}, {\it Essai d'une nouvelle analyse de la mortalit\'{e} caus\'{e}e par la
petite v\'{e}role et des avantages de l'inoculation pour la pr\'{e}venir} (in French), Mem. Math. Phys.
Acad. Roy. Sci., Paris (1760), 1--45.

\bibitem{Br2004} {\sc A. Brudnyi}, {\it An algebraic model for the center problem}, Bull.
 Sci. Math. {\bf 128} (2004), 839--857.

\bibitem{BK1964} {\sc M. Bhargava and H. Kaufman}, {\it Degrees of polynomial solutions of a class
of Riccati-type differential equations}, Collect. Math. {\bf 16} (1964), 211--223.

\bibitem{BK1965} {\sc M. Bhargava and H. Kaufman}, {\it Existence of polynomial solutions of a class of
 Riccati-type differential equations}, Collect. Math. {\bf 17} (1965), 135--143.

\bibitem{BK1966} {\sc M. Bhargava and H. Kaufman}, {\it Some properties of polynomial solutions of a
class of Riccati-type differential equations}, Collect. Math. {\bf 18} (1966), 3--6.

\bibitem{Ca1952} {\sc J.~G. Campbell}, {\it A criterion for the polynomial solutions of a certain
Riccati equation}, Amer. Math. Monthly {\bf 59} (1952), 388--389.

\bibitem{CG1954} {\sc J.~G. Campbell and M. Golomb},
{\it On the polynomial solutions of a Riccati equation}, Amer. Math. Monthly {\bf 61} (1952), 402--404.

\bibitem{CGM2013} {\sc A. Cima, A. Gasull and F. Ma\~{n}osas}, {\it An explicit bound of the
number of vanishing double moments forcing composition}, J. Differential Equations {\bf 255} (2013), 339--350.

\bibitem{D} {\sc N. Dreyer and J.~P. Gabriel}, {\it Bernouilli et la variole} (in French),
Bull. de la Soc. des Enseig. Neuch\^{a}telois de Sciences, {\bf 39} (2010), 1--13.

\bibitem{GL} {\sc A. Gasull and J. Llibre}, {\it Limit cycles for a class of Abel equations},
SIAM J. Math. Anal. {\bf 21} (1990), 1235--1244.

\bibitem{GGL2011} {\sc J. Gin\'{e}, M. Grau and J. Llibre}, {\it On the polynomial limit
cycles of polynomial differential equations}, Israel J. Math. {\bf 181} (2011), 461--475.

\bibitem{GGL2013} {\sc J. Gin\'{e}, M. Grau and J. Llibre}, {\it Universal centres and
composition conditions}, Proc. London Math. Soc. {\bf 106} (2013), 481--507.

\bibitem{Hi} {\sc E. Hille}, {\it Ordinary differential equations in the complex
domain}, Reprint of the 1976 original. Dover Publications, Inc., Mineola, NY, 1997.

\bibitem{L} {\sc A. Lins Neto}, {\it On the number of solutions of the equation
$dx/dt=\sum_{j=0}^{n}a_j(t)x^j$, $0\leq t\leq1$ for which $x(0)=x(1)$}, Inv. Math. {\bf 59} (1980), 67--76.

\bibitem{PWW} {\sc M. Pollicott, H. Wang and H. Weiss}, {\it Extracting the
time-dependent transmission rate from infection data via solution of an inverse
ODE problem}, J. Biol. Dyn. {\bf 6} (2012), 509--523.

\bibitem{Ra1936} {\sc E.~D. Rainville}, {\it Necessary conditions for polynomial
solutions of certain Riccati equations}, Amer. Math. Monthly {\bf 43} (1936), 473--476.

\bibitem{Re1972} {\sc W.~T. Reid}, {\it Riccati differential equations}, Mathematics
 in Science and Engineering Vol. {\bf 86}, Academic Press, New York, 1972.

\bibitem{SZ} {\sc J. Sotomayor and M. Zhitomirskii}, {\it Impasse singularities of
differential systems of the form $A(x)x'=F(x)$}. Special issue in celebration of Jack K.
Hale's 70th birthday, Part 4 (Atlanta, GA/Lisbon, 1998). J. Differential Equations {\bf 169} (2001), 567--587.

\end{thebibliography}
\end{document}